\theoremstyle{definition}
\newtheorem{theorem}{Theorem}[section]
\newtheorem{definition}[theorem]{Definition}
\newtheorem{proposition}[theorem]{Proposition}
\newtheorem{lemma}[theorem]{Lemma}
\newtheorem{remark}[theorem]{Remark}
\newtheorem{corollary}[theorem]{Corollary}
\numberwithin{equation}{section}
\DeclareMathOperator*{\spa}{span}
\DeclareMathOperator*{\Gr}{Gr}
\DeclareMathOperator*{\supp}{spt}
\renewcommand\div{\operatorname{div}}
\DeclareMathOperator*{\im}{im}
\DeclareMathOperator*{\id}{id}
\newcommand{\wt}{\widetilde}
\newcommand{\pr}{\partial}
\newcommand{\Lap}{\Delta}
\newcommand{\Ric}{\operatorname{Ric}}
\DeclareMathOperator*{\Hess}{Hess}
\renewcommand*\d{\mathop{}\!\mathrm{d}}
\DeclareMathOperator*{\area}{area}
\newcommand{\sn}{\operatorname{sn}}
\def\MR#1{}
\title{Widths of balls and free boundary minimal submanifolds}
\author{Jonathan J. Zhu}
\address{Department of Mathematics, Princeton University, Fine Hall, Washington Road, Princeton, NJ 08544, USA}
\email{jjzhu@math.princeton.edu}
\begin{document}
\begin{abstract}
We observe that the $k$-dimensional width of an $n$-ball in a space form is given by the area of an equatorial $k$-ball. We also discuss the relationship between widths and lower bounds for the area of a free boundary minimal submanifold in a space form ball. 
\end{abstract}
\date{\today}
\maketitle

\section{Introduction}

Let $(M^n,g)$ be a Riemannian manifold - for simplicity, diffeomorphic to a ball. The width $\omega^k(M^n)$ is a min-max quantity involving families of $k$-cycles in $M$. Specifically, we may consider continuous maps $\Phi$ from a simplicial complex to the space $\mathcal{Z}_{k,rel}(M,\pr M; \mathbb{Z}_2)$ of (relative) $k$-cycles with mod 2 coefficients, and the maximal-area cycle amongst such a family. The ($k$-dimensional) width  is then given by the least maximal area across all families which \textit{sweep out} $M$ (cf. \cite{Gu07} or \cite{LMN}): 

\begin{equation}
\omega^k(M) = \inf_{\Phi^*\alpha \neq 0} \sup_{t} \area(\Phi(t)).
\end{equation}
The sweep-out condition $\Phi^*\alpha\neq 0$ may be understood as the condition that $\bigcup_t \Phi(t)$ covers $M$; technically, $\alpha$ is a generator of $H^{n-k}(\mathcal{Z}_{k,rel}(M ,\pr M; \mathbb{Z}_2); \mathbb{Z}_2) \simeq H^n(M, \pr M; \mathbb{Z}_2) \simeq \mathbb{Z}_2$. 

Let $B^n_{R; K}$ be the ball of radius $R$ in the space form $\mathbb{M}^n_K$ of constant curvature $K$. In this note, we observe that the $k$-dimensional width of $B^n_{R; K}$ is realised by the equatorial disk $B^{k}_{R; K}$:

\begin{theorem}
\label{thm:main}
Let $1\leq k <n$, $K\in \mathbb{R}$ and $R>0$. If $K>0$, further assume that $R\leq \frac{\pi}{2\sqrt{K}}$. Then $\omega^k(B^n_{R; K}) = \area(B^k_{R; K}).$
\end{theorem}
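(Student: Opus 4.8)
The plan is to prove the two inequalities $\omega^k(B^n_{R;K})\le\area(B^k_{R;K})$ and $\omega^k(B^n_{R;K})\ge\area(B^k_{R;K})$ separately; throughout, write $B=B^n_{R;K}$ and let $p$ be its centre. For the upper bound I would exhibit an explicit sweepout by totally geodesic $k$-disks. Fix a totally geodesic $\mathbb{M}^{k-1}_K\subset\mathbb{M}^n_K$ disjoint from $B$ --- such an ``axis'' exists precisely because $R\le\frac{\pi}{2\sqrt{K}}$ when $K>0$, and trivially when $K\le0$ --- and let $\{E_s\}_{s\in\mathbb{RP}^{n-k}}$ be the pencil of totally geodesic copies of $\mathbb{M}^k_K$ containing it. The leaves $E_s\cap B$ foliate $B$, and a standard argument shows that such a foliation detects $\alpha$, i.e.\ the resulting map $\Phi$ satisfies $\Phi^*\alpha\ne0$. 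By the Pythagorean theorem in space forms, each $E_s\cap B$ is a geodesic ball inside $E_s\cong\mathbb{M}^k_K$ whose radius decreases with the distance from $p$ to $E_s$; since $r\mapsto\area(B^k_{r;K})$ is increasing for $0<r\le R$ --- here $R\le\frac{\pi}{2\sqrt{K}}$ is used again when $K>0$ --- the leaf of largest area is the unique one through $p$, namely the equatorial ball $B^k_{R;K}$. Hence $\sup_t\area(\Phi(t))=\area(B^k_{R;K})$, which gives the upper bound.

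For the lower bound I would combine min-max theory with a monotonicity estimate. First, $\omega^k(B)>0$ (for instance by a crude coarea estimate), and by the min-max construction of Almgren, Pitts and Gromov in the free boundary setting, $\omega^k(B)$ is the mass $\|V\|(B)$ of some nonzero stationary integral $k$-varifold $V$ in $\overline{B}$ with free boundary on $\pr B$. It then suffices to show $\|V\|(B)\ge\area(B^k_{R;K})$. Since $B$ is geodesically convex --- where $R\le\frac{\pi}{2\sqrt{K}}$ enters once more --- the maximum principle forces $\supp V$ to reach $\pr B$, and the free boundary monotonicity formula centred at a point of $\supp V\cap\pr B$ then yields the area bound. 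Together with the upper bound this proves $\omega^k(B)=\area(B^k_{R;K})$.

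The Pythagorean slice computation and the verification that the pencil detects $\alpha$ are routine. I expect the real obstacle to be the monotonicity claim for stationary free boundary varifolds --- precisely the ``lower bounds for the area of a free boundary minimal submanifold'' advertised in the abstract. Getting the \emph{sharp} constant $\area(B^k_{R;K})$ is delicate exactly when $\pr B$ is not totally geodesic, i.e.\ when $R<\frac{\pi}{2\sqrt{K}}$ or $K\le0$: then reflecting $V$ across $\pr B$ does not preserve stationarity on the nose, and the monotone quantity must absorb a correction coming from the second fundamental form of $\pr B$.
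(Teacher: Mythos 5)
Your upper bound (a pencil of totally geodesic $k$-planes through a fixed $\mathbb{M}^{k-1}_K$ disjoint from $B$, sliced against $B$) is a valid variant of the paper's sweepout, which instead parametrises by a fixed $\mathbb{M}^{n-k}_K$ through the centre; both give the equatorial maximum via the space-form Pythagorean theorem.

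Your lower bound has a genuine gap. You propose the ``direct'' method: realise the width as the mass of a free boundary stationary integral varifold $V$, then conclude $\mathbf{M}(V)\geq\area(B^k_{R;K})$ from a free boundary monotonicity formula at a boundary point $y\in\supp\|V\|\cap\pr B$. But such a monotonicity formula (Proposition~\ref{prop:monotonicity}, from \cite{GLZ}) only gives local density lower bounds $\Theta^k(\|V\|,y)\geq\tfrac12$ and monotonicity of a density ratio up to a scale $r_0(\Omega)<\infty$; it does \emph{not} give the sharp global mass bound, because $\pr B$ is not totally geodesic and the correction terms spoil monotonicity at scales comparable to the diameter. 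Even for $K=0$ the sharp estimate needs Brendle's specifically engineered vector field $Y$ from \cite{Br12} (this is what Theorem~\ref{thm:fb-estimate} extends to varifolds), not bare monotonicity; and for $K<0$ no direct argument of any kind is known --- the paper expressly notes that the approaches of Brendle, Freidin--McGrath and Fraser--Schoen do not seem to extend to negative curvature. The paper's actual proof for all $K$ is by comparison (Section~\ref{sec:comp}): reflection reduces the hemisphere width to the known width of the sphere, and then a radial warped-product map $B^n_{R;K}\to\mathbb{S}^n_+(\sqrt{K_1})$ is built (Proposition~\ref{prop:comp}) which contracts $k$-areas while exactly preserving equatorial $k$-areas, so the lower bound pulls back. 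Note the logical order is also the reverse of yours: the paper proves the width first and then \emph{deduces} the codimension-one free boundary area bound (Proposition~\ref{prop:estimate-intro}) from it, via instability and a mean-curvature-flow sweepout. Your last paragraph correctly locates the obstruction (reflection across a non-totally-geodesic $\pr B$ does not preserve stationarity), but the corrected monotone quantity with the sharp constant that you allude to is precisely what is missing; the comparison method is designed to bypass this entirely.
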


The main content is the lower bound, as the upper bound can be seen by explicit sweepout: Let $p$ be the centre of $B^n_{R;K} \subset \mathbb{M}^n_K$ and fix a totally geodesic $\mathbb{M}^{n-k}_K$ which passes through $p$. For each $y\in \mathbb{M}^{n-k}_K$, let $\Sigma_y$ be the totally geodesic $\mathbb{M}^k_K$ which intersects $\mathbb{M}^{n-k}_K$ orthogonally at $y$; then the $\Sigma_y\cap B^n_{R;K}$ give a sweepout of $B^n_{R;K}$ whose maximal slice is an equatorial $B^k_{R;K}$. 

The corresponding result for the interrelated notion of \textit{waists} was recently proven by Akopyan and Karasev \cite{AK20}. Gromov \cite{Gr83} originally introduced the idea of using families of cycles to quantify the size of a manifold. Almgren \cite{Al65} had previously developed min-max methods to construct minimal cycles; he showed, in considerable generality, that the width is realised by a stationary integral varifold - a weak version of a minimal submanifold. More recently, Guth proved fundamental estimates for $k$-dimensional widths \cite{Gu07} and the more general \textit{$p$-widths of dimension $k$} \cite{Gu09}. A striking Weyl law for the latter was proven by Liokumovich-Marques-Neves \cite{LMN}. 

Gromov had outlined two different approaches towards width/waist inequalities. In \cite{Gr83}, he observed that Almgren's work yields a simple proof that the width of the unit sphere is $\omega^k(\mathbb{S}^n) = \area(\mathbb{S}^k)$, as any (nontrivial) stationary $k$-varifold in $\mathbb{S}^n$ has area at least that of the unit $k$-sphere\footnote{For instance, the cone over a stationary varifold in $\mathbb{S}^n$ is stationary in $\mathbb{R}^{n+1}$, and the density of such a cone is at least 1 by the classical monotonicity formula.}. It is also claimed there that the $K=0$ case of Theorem \ref{thm:main} follows from the same `direct' method: one realises the width by a stationary varifold, then proves a sharp lower bound for such a varifold. On the other hand, in \cite{Gr10}, Gromov argues that the nonpositive curvature cases of Theorem \ref{thm:main} may be deduced from the width inequality for the sphere, by utilising a radial comparison map which contracts $k$-areas, but which preserves equatorial areas. 

In this note, we use comparison with the hemisphere (Section \ref{sec:comp}) to deduce Theorem \ref{thm:main}, as in the `comparison' approach outlined by Gromov and previously executed by Akopyan-Karasev \cite{AK20}. (The hemisphere inherits a width inequality from the sphere, by reflection.) We also fulfil the `direct' approach in certain cases (Section \ref{sec:direct}): Noting that for manifolds with boundary $(M,\pr M)$, the width is realised by a varifold which is stationary with respect to variations preserving the boundary, one requires an estimate for the area of these `free boundary stationary' varifolds in $(M,\pr M)$. The corresponding regular solutions are minimal submanifolds which meet the boundary orthogonally, and for such submanifolds of the Euclidean ball $B^n \subset \mathbb{R}^n$ an estimate was proven by Brendle \cite{Br12} using an ingenious variational argument:

\begin{theorem}[\cite{Br12}]
\label{thm:brendle}
Let $\Sigma$ be a $k$-dimensional minimal submanifold in $B^n$ whose boundary $\pr \Sigma \subset \pr B^n$ meets $\pr B^n$ orthogonally. Then $\area(\Sigma^k) \geq \area (B^k)$. Moreover, if equality holds, then $\Sigma^k$ is contained in a $k$-dimensional affine subspace. 
\end{theorem}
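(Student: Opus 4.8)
\medskip
\noindent\emph{Proof proposal.}
The basic tool is the first variation of area. Since $\Sigma$ is minimal, for any $C^{1}$ vector field $X$ on $\R^{n}$ the tangential divergence satisfies $\int_{\Sigma}\div_{\Sigma}X\,\d\mu=\int_{\pr\Sigma}\langle X,\nu\rangle\,\d\sigma$, and the orthogonality hypothesis says precisely that the outward conormal $\nu$ of $\pr\Sigma$ in $\Sigma$ coincides along $\pr B^{n}$ with the position vector $x$ (a unit vector there). Hence
\begin{equation*}
\int_{\Sigma}\div_{\Sigma}X\,\d\mu=\int_{\pr\Sigma}\langle X,x\rangle\,\d\sigma .
\end{equation*}
Taking $X=x$, for which $\div_{\Sigma}x=k$, already gives the identity $\vol(\pr\Sigma)=k\,\area(\Sigma)$; but any estimate coming from a smooth bounded $X$ is scale-invariant and cannot detect the sharp constant $\area(B^{k})$. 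One is therefore forced to feed in a \emph{singular} vector field, so that excising the singularity contributes a term of the right size --- precisely the mechanism of the Euclidean monotonicity formula.

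The case $0\in\Sigma$ is then essentially immediate. With $W(x)=|x|^{-k}x$ one computes, using minimality, $\div_{\Sigma}W=k\,|x|^{-k-2}|x^{\perp}|^{2}\ge 0$, where $(\cdot)^{\perp}$ denotes the component normal to $\Sigma$. Excising $B_{\eps}(0)$, applying the identity on $\Sigma\setminus B_{\eps}(0)$, and letting $\eps\to 0$ --- the density of $\Sigma$ at the interior point $0$ (interior because $0\notin\pr B^{n}$) being $1$ --- yields, after using $\vol(\pr\Sigma)=k\,\area(\Sigma)$ for the boundary term, the exact relation
\begin{equation*}
\area(\Sigma)=\area(B^{k})+\int_{\Sigma}\frac{|x^{\perp}|^{2}}{|x|^{k+2}}\,\d\mu\ \ge\ \area(B^{k}).
\end{equation*}
Equality forces $x^{\perp}\equiv 0$, so the position vector is everywhere tangent to $\Sigma$; then $\Sigma$ is a cone, and being a smooth minimal cone it is a $k$-plane through the origin, i.e.\ an equatorial $B^{k}$. (The case $0\notin\Sigma$ treated below will in fact give \emph{strict} inequality --- its equality case would make $\Sigma$ flat, hence free-boundary in an affine $k$-plane, which forces the plane through the origin and so $0\in\Sigma$, a contradiction --- so every equality case is of this form.)

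It remains to prove the inequality when $0\notin\Sigma$, which is the genuine difficulty: no point of $\Sigma$ plays the role of the centre, the clean boundary flux $\langle|x|^{-k}x,x\rangle\equiv 1$ on $\pr B^{n}$ having been special to it. For an interior point $\xi\in\Sigma$ the bare kernel $W_{\xi}(x)=|x-\xi|^{-k}(x-\xi)$ still gives, by the same excision argument,
\begin{equation*}
\int_{\pr\Sigma}\frac{1-\langle x,\xi\rangle}{|x-\xi|^{k}}\,\d\sigma(x)=k\,\area(B^{k})+k\int_{\Sigma}\frac{|(x-\xi)^{\perp}|^{2}}{|x-\xi|^{k+2}}\,\d\mu(x)\ \ge\ k\,\area(B^{k}),
\end{equation*}
one inequality for each $\xi\in\Sigma$; the trouble is that the boundary integrand $1-\langle x,\xi\rangle$ cannot be controlled over the a priori unknown set $\pr\Sigma$. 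Brendle's ingenious fix is to correct $W_{\xi}$ by an ``image'' term singular only at the inverted point $\xi^{\ast}=\xi/|\xi|^{2}$, which lies outside $\ol{B^{n}}$ and is hence harmless for the divergence computation on $\Sigma$, namely $|\xi|^{2-k}\,|x-\xi^{\ast}|^{-k}(x-\xi^{\ast})$: on $\pr B^{n}$ the corrected flux collapses to the Poisson-type expression $(1-|\xi|^{2})|x-\xi|^{-k}$. The remaining step --- which I expect to be the substantive one --- is to combine these identities over $\xi\in\Sigma$, presumably by integrating them and invoking $\vol(\pr\Sigma)=k\,\area(\Sigma)$ together with the mean-value properties of the Poisson kernel of the ball, so as to arrive at $\area(\Sigma)\ge\area(B^{k})$.

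The crux, and where I expect essentially all of the work to go, is precisely this combination over $\xi$: it must be arranged so as \emph{not} to collapse into a tautology --- merely bounding the inner $\xi$-integral uniformly and multiplying against the per-point inequality just recovers $\area(\Sigma)\ge\area(\Sigma)$, again by scale invariance --- and turning the boundary flux into a genuine Poisson kernel, integrable against any $\pr\Sigma$ of known total mass, is exactly what is needed to make the combination informative. Everything else (the first variation formula, the monotonicity computation, the excision limits) is routine; and since the argument uses only the stationarity identity, it should adapt to free-boundary stationary integral varifolds, the form needed for the width application.
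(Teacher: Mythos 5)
Your proposal is partly on the right track but has several genuine gaps, and where it speculates about Brendle's mechanism it gets the key ingredient wrong.

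Your treatment of the case $0\in\Sigma$ is correct: testing the stationarity identity with $W(x)=|x|^{-k}x$, excising a small ball around $0$, and using both $\vol(\pr\Sigma)=k\,\area(\Sigma)$ and $\Theta^k(\Sigma,0)=1$ gives the excess identity and the sharp inequality. But this is only a special case, and the rigidity deduction you sketch ($\Sigma$ a smooth minimal cone, hence a plane) would still need more care in higher codimension.

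For $0\notin\Sigma$, your reconstruction of Brendle's vector field and your proposed closing step are both incorrect.

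First, the ``image'' correction you write down, $|\xi|^{2-k}\,|x-\xi^{\ast}|^{-k}(x-\xi^{\ast})$, does make the boundary flux collapse to $(1-|\xi|^{2})|x-\xi|^{-k}$, but that is not the whole story. The image term is smooth on $\overline{B^{n}}$ yet has \emph{nonnegative} tangential divergence $k\,|\xi|^{2-k}\,|x-\xi^{\ast}|^{-k-2}|(x-\xi^{\ast})^{\perp}|^{2}$, so when you move it to the right side of the divergence theorem it \emph{subtracts} from the useful bulk term. With this two-term field alone the inequality simply does not close. Brendle's actual vector field has, in addition, a constant term $\tfrac{x}{2}$ and an integral term $-\tfrac{k-2}{2}\int\cdots\,\d t$; the image term appears with coefficient $\tfrac12$ (not $|\xi|^{2-k}$), and the combination is engineered so that (i) $Y$ is \emph{tangent} to $\pr B^{n}$ away from the singular point, killing the boundary integral outright, and (ii) $\div_{S}Y\le\tfrac{k}{2}$ pointwise for every $k$-plane $S$. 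This divergence bound, not the vanishing of the image-term divergence, is what makes the argument sharp.

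Second, the strategy of ``combining the per-point identities over $\xi\in\Sigma$ by integrating and invoking a mean-value property of a Poisson-type kernel'' is not Brendle's mechanism and is not developed to the point of being checkable; you acknowledge this yourself. Moreover, it is unnecessary. Once the correct vector field is in hand, a \emph{single} point suffices: Brendle centres the field at a boundary point $y\in\pr\Sigma\subset\pr B^{n}$ (the set $\pr\Sigma$ is nonempty by the convex hull property). Because $Y$ is tangent to $\pr B^{n}$ there is no boundary term; the excision near $y$ produces $k\alpha_{k}\Theta^{k}(\Sigma,y)$, and since the density of a free boundary minimal submanifold at a boundary point is at least $\tfrac12$, the divergence bound $\div_{S}Y\le\tfrac{k}{2}$ immediately gives $\tfrac{k}{2}\area(\Sigma)\ge k\alpha_{k}\cdot\tfrac12$, i.e.\ $\area(\Sigma)\ge\alpha_{k}=\area(B^{k})$. (The interior version with the image term, as in the remark following Theorem~\ref{thm:fb-estimate}, works similarly for a single interior $y\in\Sigma\cap B^{n}$ with density $\ge1$; again no integration over $\xi$ is required.) The rigidity case then comes from tracking the equality in the divergence bound, which forces $\pi_{S}^{\perp}(x-y)=0$ a.e.\ for every such $y$ and hence that $\Sigma$ lies in a $k$-plane.

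In short: the monotonicity/excision framework you set up is the right scaffolding, but the heart of the proof --- the precise construction of $Y$ with the two properties (tangency to $\pr B^{n}$, divergence $\le k/2$) --- is missing, the coefficient of the image term is wrong, and the closing ``integrate over $\xi$'' step is both unneeded and unjustified.
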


One technical component of this note is to extend Theorem \ref{thm:brendle} to the setting of integral, free boundary stationary varifolds in $(B^n, \pr B^n)$ - see Theorem \ref{thm:fb-estimate}. Note that if $2\leq k=n-1 \leq 6$, one can simply use the smooth version Theorem \ref{thm:brendle}, by invoking the free boundary min-max regularity theory of \cite{LZ}. We remark that Brendle's approach was extended to certain positive curvature cases by Freidin-McGrath \cite{FM20, FM19}, which should in turn generalise to the varifold setting. 

On the other hand, we observe that in codimension 1, one may deduce sharp lower bounds for free boundary minimal hypersurfaces by the width (Section \ref{sec:estimates}):

\begin{proposition}
\label{prop:estimate-intro}
Let $K\in \mathbb{R}$, $R>0$. If $K>0$, further assume that $R\leq \frac{\pi}{2\sqrt{K}}$. Then any nontrivial embedded free boundary minimal hypersurface $\Sigma^{n-1}$ in $(B^n_{R; K}, \pr B^n_{R; K})$ satisfies $\area(\Sigma) \geq \area( B^{n-1}_{R; K})$. 
\end{proposition}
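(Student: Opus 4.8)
The plan is to use the fact that an embedded free boundary minimal hypersurface $\Sigma^{n-1}$ in a ball can be used to build a sweepout of the ball, so that the width inequality $\omega^{n-1}(B^n_{R;K}) = \area(B^{n-1}_{R;K})$ from Theorem \ref{thm:main} forces $\area(\Sigma) \geq \area(B^{n-1}_{R;K})$. In codimension one this works because a two-sided hypersurface separates the ball (or, more precisely, the relative cycle $[\Sigma]$ can be taken as a slice in a sweepout). First I would check that $\Sigma$, being embedded and free boundary, defines a nontrivial relative cycle in $\mathcal{Z}_{n-1,rel}(B^n_{R;K},\pr B^n_{R;K};\mathbb{Z}_2)$: since the hypersurface is nontrivial and embedded, $B^n_{R;K}\setminus\Sigma$ decomposes into (at least two) regions, and $\Sigma$ bounds one of them, so $[\Sigma]$ is the boundary (mod $2$, relative to $\pr B^n_{R;K}$) of that region's fundamental class — i.e. $\Sigma$ is a slice of a one-parameter family detecting the generator $\alpha$.

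Next I would promote this single slice to a genuine sweepout $\Phi\colon [0,1]\to \mathcal{Z}_{n-1,rel}$ with $\Phi^*\alpha\neq 0$ and $\sup_t\area(\Phi(t)) = \area(\Sigma)$. The natural construction is a \emph{mean-curvature-type} or simply \emph{signed-distance} deformation of $\Sigma$ to each side: let $U$ be one of the components of $B^n_{R;K}\setminus\Sigma$, and for $s\in[0,1]$ let $\Omega_s$ interpolate between $\emptyset$ and $B^n_{R;K}$, passing through $U$ at the appropriate parameter, with $\Phi(s) = \pr\Omega_s$ (relative boundary). Here the key point is a monotonicity/calibration observation: because $\Sigma$ is minimal and meets $\pr B^n_{R;K}$ orthogonally, it is \emph{area-maximising among the slices} of the foliation-like family obtained by pushing $\Omega_s$ out along the region $U$ and contracting it on the complement. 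Concretely one can run the region's boundary outward by the level sets of a suitable function and use the first variation formula together with the free boundary condition — the boundary integrand vanishes by orthogonality — to see that the area of nearby slices does not exceed $\area(\Sigma)$ to first order, and that $\Sigma$ is the unique maximum. One then caps off the two ends of this local family by shrinking to a point and expanding to fill $B^n_{R;K}$, which only decreases area, so the global sup over the sweepout is still $\area(\Sigma)$.

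Given such a sweepout, the definition of width immediately yields
\begin{equation}
\area(\Sigma) \;=\; \sup_t \area(\Phi(t)) \;\geq\; \omega^{n-1}(B^n_{R;K}) \;=\; \area(B^{n-1}_{R;K}),
\end{equation}
where the last equality is Theorem \ref{thm:main} (with the standing hypothesis $R\leq \pi/(2\sqrt{K})$ when $K>0$, which is exactly the hypothesis of the Proposition). This is the desired inequality.

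The main obstacle, and the step needing the most care, is the construction of the sweepout with controlled maximal slice area — in particular verifying that $\Sigma$ itself is the maximal slice rather than some intermediate surface of larger area. A clean way to handle this is to not insist on a smooth foliation but to use the standard fact that a separating embedded hypersurface $\Sigma$ together with the two ambient regions it bounds already gives a $2$-parameter (or suitably subdivided $1$-parameter) admissible family in the sense of Almgren–Pitts min-max whose only nontrivial-area slice is $\Sigma$: one sweeps $U$ out from a point to all of $U$ (boundaries of sublevel sets of the distance to an interior point of $U$, whose areas are bounded by $\area(\pr U) = \area(\Sigma) + \area(\pr U\cap \pr B^n)$ — and one must be slightly careful that the portion on $\pr B^n$ does not count, which is why relative cycles are used), then does the same on the other side. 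Making the area bookkeeping rigorous — i.e.\ that every relative slice has mass at most $\area(\Sigma)$, using the isoperimetric-type monotonicity of the distance sweepout and the minimality of $\Sigma$ — is the technical heart; alternatively, one can cite the interpolation machinery of Liokumovich–Marques–Neves or the free boundary version in \cite{LZ} to convert the discrete family into a continuous one without increasing the mass. In dimensions where $\Sigma$ may a priori be one-sided or non-separating, one passes to the double cover or works with $\mathbb{Z}_2$ coefficients throughout, which is consistent with the cohomological formulation of the sweepout condition in the Introduction.
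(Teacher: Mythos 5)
Your high-level plan is the same as the paper's: realise $\Sigma$ as the maximal slice of a sweepout of the ball, then apply Theorem~\ref{thm:main}. But the technical heart of your argument --- the claim that $\Sigma$ can be made the maximal slice --- is not correct as stated, and this is precisely where the paper has to do real work that you have omitted.

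You assert that ``because $\Sigma$ is minimal and meets $\pr B^n_{R;K}$ orthogonally, it is area-maximising among the slices'' of a foliation-like family, and that the first variation formula with the boundary integrand vanishing shows nearby slices have area $\leq \area(\Sigma)$ to first order. This conflates a critical point with a maximum. Vanishing first variation is exactly what minimality plus the free boundary condition give; it says nothing about the second-order behavior, and indeed a \emph{stable} free boundary minimal hypersurface would have nearby normal graphs of \emph{larger} area, making it impossible to build a sweepout with $\Sigma$ as the max. Similarly, your fallback construction --- sweeping out $U$ by sublevel sets of the distance to an interior point --- has no reason to produce slices of mass at most $\area(\Sigma)$; there is no isoperimetric monotonicity of that kind, and generically such distance spheres have area far exceeding that of a thin minimal slice. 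What is genuinely needed is \emph{instability} of $\Sigma$. The paper proves (Corollary~\ref{cor:estimate}, via explicit test functions in the second variation form~(\ref{eq:stab-1})--(\ref{eq:stab-2}), treating $K=1$, $K=0$, $K=-1$ separately) that space form balls contain no stable free boundary minimal hypersurfaces; then Proposition~\ref{prop:width-lb} uses instability to deform $\Sigma$ by its first eigenfunction $\phi_1>0$ (which has $\lambda_1<0$ and hence gives a pairwise disjoint, mean-convex foliation $\Sigma_t$ with $\area(\Sigma_t)<\area(\Sigma)$ for $t\neq 0$), and then runs free boundary mean curvature flow from $\Sigma_{\pm\epsilon}$ to sweep out the remaining two regions with monotonically decreasing area. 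Both the instability computation and the mean curvature flow step are essential, and neither appears in your proposal, so as written the argument has a genuine gap.
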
 

This bound also implies a sharp inequality for the isoperimetric ratio when $K\leq 0$ (see Corollary \ref{cor:iso}). To the author's knowledge, these bounds are new for $K<0$, and indeed it does not seem clear how to generalise other direct approaches such as \cite{Br12, FM20, FM19} and \cite{FS11, FS13} to the negative curvature setting. We remark that the `direct' method seems to favour positive curvature, whilst the `comparison' method appears to favour negative curvature. Given Proposition \ref{prop:estimate-intro}, however, we suggest that it is worth re-examining whether the analogous lower bounds hold in any codimension (as in Theorem \ref{thm:brendle}).

We also mention the related notion of `waists'. The terminology is not entirely standard, but here we distinguish waists as min-max quantities for the fibres of sufficiently regular maps $M^n \to \mathbb{R}^{n-k}$, which measure the $n$-volume of neighbourhoods of the fibres, or in the limit, their $k$-dimensional Minkowski content. Gromov \cite{Gr03} initially proved a waist inequality for $M=\mathbb{R}^n$ with the Gaussian measure, and sketched a proof (later detailed by Memarian \cite{Me11}) of a sharp waist inequality for the round sphere $\mathbb{S}^n$. For the $k$-dimensional content, Akopyan-Karasev \cite{AK} used orthogonal projection to deduce a sharp waist inequality for $B^n$ from the Gromov-Memarian estimate and later used the comparison approach to establish the sharp waist inequality in space form balls \cite{AK20}. We remark that their comparison map is described via a conformal presentation, but should be equivalent to the warped product presentation used here. 

Finally, we note that Gromov \cite{Gr83} had also previously proven an elementary lower bound $\omega^k(B^n) \geq c(n)>0$ (see also Guth \cite{Gu09}).


\subsection*{Acknowledgements}

This work was supported in part by the National Science Foundation under grant DMS-1802984. The author would like to thank Antoine Song, as well as Kevin Sackel and Umut Varolgunes, for their encouragement and stimulating discussions. The author also thanks Larry Guth, Peter McGrath, Keaton Naff, Ao Sun and the anonymous referees for pointing them to helpful references and other valuable suggestions. 

\section{Preliminaries}

As above we denote by $B^k_{R; K}(p)$ the geodesic ball in the space form $\mathbb{M}^k_K$ of constant curvature $K$, of radius $R$ centred at $p$. We will frequently suppress aspects of the notation when clear from context: When the radius is omitted it can be taken as $R=1$; when the curvature $K$ is omitted it can be taken as $K=0$; when the point $p$ is omitted it can be taken as the origin; if the dimension is suppressed, it should be taken to be the ambient dimension $n$. For instance, $A_{s,t}(x) = B_t(x)\setminus \overline{B_s(x)} = B^n_t(x)\setminus \overline{B^n_s(x)}$ denotes the Euclidean annulus about $x\in \mathbb{R}^n$. We denote the area of the unit Euclidean $k$-ball by $\alpha_k = \area(B^k)$. 

We denote by $\mathbb{S}^k(r)$ the round $k$-sphere of radius $r$, and $\mathbb{S}^k_+(r)$ the corresponding hemisphere. We denote the area of the unit round $k$-sphere by $\beta_k = \area(\mathbb{S}^k)$. 

Let $\mathfrak{X}(\mathbb{R}^N)$ be the space of vector fields on $\mathbb{R}^N$. For a submanifold $M^n \subset \mathbb{R}^N$ we define \[\mathfrak{X}(M)= \{X\in \mathfrak{X}(\mathbb{R}^N) | \forall p \in M, X(p) \in T_p M \},\] \[\mathfrak{X}(M, \pr M)= \{X\in \mathfrak{X}(M) | \forall p \in \pr M, X(p) \in T_p \pr M\}.\] The latter is the space of tangent vector fields on $M$ that are also tangent along its boundary. 

\subsection{Submanifolds}

Let $M^n$ be a compact Riemannian manifold with boundary $\pr M$. A submanifold $\Sigma$ is said to have \textit{free boundary} if $\pr \Sigma \subset \pr M$ meets $\pr M$ orthogonally. The submanifold $\Sigma$ is minimal if its mean curvature (vector) vanishes. 

\subsection{Varifolds}
We recall some basic definitions for varifolds; the reader is directed to \cite{Si83} for a comprehensive account or \cite{LZ} for a summary. 

The Grassmannian of (unoriented) $k$-planes in $\mathbb{R}^n$ is denoted $\Gr(k,n)$. The Grassmannian bundle over $A\subset \mathbb{R}^n$ is denoted $\Gr_k(A) = A\times \Gr(k,n)$. The space $\mathcal{V}_k(\mathbb{R}^n)$ of $k$-varifolds in $\mathbb{R}^n$ is the set of all Radon measures on $\Gr_k(\mathbb{R}^n)$, equipped with the weak topology. The support $\supp \|V\|$ is the smallest closed subset $\mathcal{K} \subset \mathbb{R}^n$ such that the restriction of $V$ to $\Gr_k(\mathbb{R}^n\setminus \mathcal{K})$ is zero. We denote the space of integer rectifiable (integral) varifolds by $\mathcal{IV}_k(\mathbb{R}^n)$ (see \cite{Si83}). The weight of $V$ is the measure $\|V\|=\mu_V$ and the mass is $\mathbf{M}(V)= \|V\|(\mathbb{R}^n)$. Any smooth map $\phi:\mathbb{R}^n\to \mathbb{R}^n$ gives rise to a pushforward map $\phi_\#: \mathcal{V}_k(\mathbb{R}^n) \to \mathcal{V}_k(\mathbb{R}^n)$. 

Let $M^n$ be a compact Riemannian manifold with boundary, which we may isometrically embed in $\mathbb{R}^N$. The space $\mathcal{IV}_k(M)$ of integral $k$-varifolds on $M$ consists of those $V\in\mathcal{IV}_k(\mathbb{R}^N)$ with $\supp \|V\|\subset \overline{M}$. 

\subsubsection{Stationary varifolds}

Let $\mathfrak{X}_0(\mathbb{R}^n)$ be the space of compactly supported vector fields on $\mathbb{R}^n$. Any smooth $X\in \mathfrak{X}_0(\mathbb{R}^n)$ generates a one-parameter family of diffeomorphisms $\phi_t:\mathbb{R}^n\to \mathbb{R}^n$, and a varifold $V\in \mathcal{V}_k(\mathbb{R}^n)$ is \textit{stationary} if 
\begin{equation}\label{eq:stat0}\delta V(X) = \left.\frac{\d}{\d t}\right|_{t=0} \mathbf{M}((\phi_t)_\# V) = \int\div_S X(x) \d V(x,S) =0\end{equation} for all such $X$. 

Similarly, we say that a varifold $V\in \mathcal{IV}_k(M)$ is \textit{free boundary stationary} if 
\begin{equation}
\label{eq:stat}
\int\div_S X(x) \d V(x,S) =0
\end{equation}
 holds for any smooth $X \in \mathfrak{X}(M,\pr M)$ - that is, tangent along $\pr M$. By standard approximation arguments, it is equivalent that (\ref{eq:stat}) holds for all \textit{Lipschitz} vector fields $X \in \mathfrak{X}(M,\pr M)$. We denote the space of (integral) free boundary stationary varifolds by $\mathcal{SV}_k(M,\pr M)$.

\subsection{Widths}

For precise definitions of min-max widths, we refer the reader to \cite{Gu09} or \cite{LMN}, in which more general widths are defined. Note that what we refer to as the $k$-dimensional width $\omega^k$ is, in the terminology of \cite{LMN}, the 1-parameter width $\omega^k_1$ of dimension $k$. For our technical purposes, it is sufficient to know that the widths are realised by stationary integral varifolds, which was proven by Almgren \cite{Al65}:

\begin{proposition}[\cite{Al65}]
\label{thm:min-max}
Let $M^n$ Riemannian manifold with smooth boundary $\pr M$. For any $1\leq k<n$, the $k$-dimensional width is realised by a nontrivial, integral, free boundary stationary varifold $V$ in $(M ,\pr M)$. That is, there exists $V\in\mathcal{SV}_k(M, \pr M)$ such that \[ \omega^k(M)=\mathbf{M}(V) .\] 
\end{proposition}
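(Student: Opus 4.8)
The plan is to run the Almgren--Pitts min-max scheme relative to $\partial M$, with all deformations generated by vector fields in $\mathfrak{X}(M,\partial M)$; this is Almgren's theorem \cite{Al65} adapted to the free boundary setting.

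First I would fix a minimizing sequence of sweepouts $\Phi_i : P \to \mathcal{Z}_{k,rel}(M,\partial M;\mathbb{Z}_2)$, with $\Phi_i^*\alpha \neq 0$ and $\sup_t \mathbf{M}(\Phi_i(t)) \to \omega^k(M)$, and pass to the associated critical set $\mathbf{C}(\{\Phi_i\})$ of varifolds arising as limits $V = \lim_j |\Phi_{i_j}(t_j)|$ along slices with $\mathbf{M}(\Phi_{i_j}(t_j)) \to \omega^k(M)$. Since the slices are integral mod-$2$ cycles and thus have integer density, Almgren's compactness theorem for integral varifolds of bounded mass shows $\mathbf{C}(\{\Phi_i\})$ is nonempty and consists of $V \in \mathcal{IV}_k(M)$ with $\mathbf{M}(V) = \omega^k(M)$.

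The main step, which I expect to be the hardest, is to show $\mathbf{C}(\{\Phi_i\})$ contains a free boundary stationary varifold. Arguing by contradiction, if every $V \in \mathbf{C}(\{\Phi_i\})$ fails \eqref{eq:stat}, then each such $V$ admits $X_V \in \mathfrak{X}(M,\partial M)$ with $\delta V(X_V) = \int \div_S X_V(x)\, dV(x,S) < 0$. Because admissible deformations must send $(\overline M,\partial M)$ to itself, I would use the isotopies generated by such $X_V$ --- which are tangent to $\partial M$ and hence act continuously on $\mathcal{Z}_{k,rel}(M,\partial M;\mathbb{Z}_2)$ --- and patch them, via a continuity and compactness argument, into a single deformation depending continuously on the varifold that strictly lowers the mass of every slice whose varifold limit lies near $\mathbf{C}(\{\Phi_i\})$, while controlling the remaining slices. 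This boundary-compatible pull-tight procedure (cf. \cite{Al65}, and \cite{LZ} for the free boundary version) applied to $\Phi_i$ for $i$ large produces a competitor $\Phi_i'$ which is homotopic to $\Phi_i$, hence satisfies $(\Phi_i')^*\alpha \neq 0$, but has $\sup_t \mathbf{M}(\Phi_i'(t)) < \omega^k(M)$, contradicting the definition of the width. Therefore some $V \in \mathbf{C}(\{\Phi_i\})$ lies in $\mathcal{SV}_k(M,\partial M)$.

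Finally, $V$ is nontrivial: the condition $\Phi^*\alpha \neq 0$ forces the family to sweep out $M$, so the elementary lower bound $\omega^k(M) \geq c(M) > 0$ of Gromov \cite{Gr83} (see also Guth \cite{Gu09}) gives $\mathbf{M}(V) = \omega^k(M) > 0$; integrality was already recorded in the construction of $\mathbf{C}(\{\Phi_i\})$. The only genuinely delicate point is the pull-tight deformation, which must simultaneously depend continuously on the varifold, respect the relative cycle structure on $\mathcal{Z}_{k,rel}(M,\partial M;\mathbb{Z}_2)$, and be realized by ambient isotopies tangent to $\partial M$.
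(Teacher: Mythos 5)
The paper does not prove this proposition; it is stated as a citation to Almgren's 1965 work \cite{Al65}, with the free boundary regularity theory deferred to \cite{LZ}, so there is no in-text argument to compare your sketch against. That said, your outline correctly captures the standard Almgren--Pitts pull-tight construction adapted to the free boundary setting: minimizing sequence, critical set, tangential deformations, and nontriviality via Gromov's width lower bound.

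One point worth tightening: the pull-tight step is usually not run as a direct contradiction that ``strictly lowers the mass of every slice whose varifold limit lies near $\mathbf{C}(\{\Phi_i\})$.'' Mass is only lower semicontinuous under varifold convergence, and one cannot in general achieve a uniform strict decrease of the sup. The correct formulation is to build a continuous map from a neighbourhood of $\mathbf{C}(\{\Phi_i\})$ in the varifold space to isotopies of $(M,\partial M)$, vanishing exactly on the free boundary stationary varifolds, and to apply it to produce a new minimizing sequence $\Phi_i'$ with the \emph{same} width but whose critical set $\mathbf{C}(\{\Phi_i'\})$ consists \emph{only} of stationary varifolds; existence of a stationary element is then immediate rather than obtained by contradiction. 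You also correctly flag the genuinely delicate ingredients --- continuity of the deformation in the $\mathbf{F}$-metric, compatibility with the flat topology on $\mathcal{Z}_{k,\mathrm{rel}}(M,\partial M;\mathbb{Z}_2)$, and tangency along $\partial M$ --- which are exactly the points handled in \cite{Al65} and in the free boundary adaptation of \cite{LZ}. Nontriviality via $\omega^k(M)>0$ and integrality via Almgren's compactness theorem are fine as you have them.
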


\section{Widths by comparison}
\label{sec:comp}

In this section, we give a proof of Theorem \ref{thm:main} for all $K$, following the approach outlined by Gromov in \cite{Gr10}. In particular, we use the width of the hemisphere together with radial $k$-area-contracting maps to deduce the estimate for the other balls. We remark that the same method should also yield waist inequalities by comparison. 

It will be convenient to consider warped products:

\begin{definition}
\label{def:warped}
Consider smooth functions $h:[0,R]\to [0,\infty)$ satisfying: \begin{enumerate}
\item[($\dagger$)] $h(0)=0$ and $h(r)/r$ is a smooth function of $r^2$. 
\end{enumerate}

Note that this condition ensures that $r\mapsto h(r)$ defines a smooth map $\mathbb{R}^n\to \mathbb{R}^n$. We define $M^n_{h,R}$ to be the warped product $[0,R]\times_h \mathbb{S}^{n-1}$ (again note that the metric $g=\d r^2 + h(r)^2 g_{\mathbb{S}^{n-1}}$ is smooth at $r=0$). 

We say that $M$ has \textit{(at-least) equatorial widths} if $\omega^k(M^n_{h,R}) \geq \area(M^k_{h,R})$. 
\end{definition}

Note that $\area(M^k_{h,R}) = \area(\mathbb{S}^{k-1}) \int_0^R h(r)^{k-1} \d r$. 

Note also that the geodesic ball $B^n_{R; K}$ is isometric to $M^n_{\sn_K, R}$, where
\begin{equation}
\sn_K(r) = 
\begin{cases}
\frac{1}{\sqrt{K}}\sin(r\sqrt{K}), & K>0 \\
r ,& K=0\\
\frac{1}{\sqrt{|K|}} \sinh(r \sqrt{|K|}), & K<0.
\end{cases}
\end{equation}

Theorem \ref{thm:main} is thus equivalent to the statement that $M^n_{\sn_K,R}$ has equatorial widths, where $R< \frac{\pi}{2\sqrt{K}}$ if $K>0$. (Recall that the upper bound is clear from the totally geodesic foliation.) 

\subsection{Width of the hemisphere} 
\label{sec:hemisphere}

First, we obtain the width of the hemisphere by reflection:

\begin{proof}[Proof of Theorem \ref{thm:main} ($K>0, R= \frac{\pi}{2\sqrt{K}}$)]
By scaling we may assume $K=1$. Any sweepout of the hemisphere $\mathbb{S}^n_+$ induces a sweepout of the sphere $\mathbb{S}^n$ by reflection. Since the sphere is known to have equatorial widths, we have $\omega^k(\mathbb{S}^n_+) \geq \frac{1}{2} \omega^k(\mathbb{S}^n) = \frac{1}{2} \area(\mathbb{S}^k) = \area(\mathbb{S}^k_+)$.
\end{proof}

\subsection{Comparison for warped products}

For certain warped products, we can pull back equatorial widths by constructing a $k$-area contracting map. 

\begin{proposition}
\label{prop:comp}
Let $1\leq k<n$ and consider smooth functions $h_0: [0,R_0] \to [0,\infty)$, $h_1: [0,R_1] \to [0,\infty)$, $f:[0,R_0]\to[0,R_1]$ satisfying ($\dagger$). Further suppose that $f' \geq 1$, $R_1=f(R_0)$ and $h_0(r)^{k-1} = f'(r) h_1(f(r))^{k-1}$. 

If the warped product ball $M^n_{h_1,R_1}$ has equatorial widths, then so does $M^n_{h_0, R_0}$. 
\end{proposition}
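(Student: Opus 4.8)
The plan is to construct the radial map $F : M^n_{h_0, R_0} \to M^n_{h_1, R_1}$, $F(r,\theta) = (f(r), \theta)$, show that it is a diffeomorphism which does not increase $k$-dimensional area, and then push sweepouts of $M^n_{h_0,R_0}$ forward to sweepouts of $M^n_{h_1,R_1}$, where the equatorial estimate is assumed to hold. Condition ($\dagger$) for $f$ guarantees that $x \mapsto (f(|x|)/|x|)\,x$ is a smooth self-map of $\mathbb{R}^n$, so $F$ is smooth; since $f' \geq 1 > 0$, $f(0) = 0$ and $f(R_0) = R_1$, the function $f$ is a diffeomorphism of $[0,R_0]$ onto $[0,R_1]$ fixing $0$ and sending $R_0$ to $R_1$, whence $F$ is a diffeomorphism of the closed balls carrying $\pr M^n_{h_0,R_0}$ onto $\pr M^n_{h_1,R_1}$.

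The geometric core is a pointwise bound on the $k$-dilation of $F$. At a point $(r,\theta)$ with $r>0$, take the $g_0$-orthonormal frame consisting of $\pr_r$ together with $h_0(r)^{-1}\hat e_1, \dots, h_0(r)^{-1}\hat e_{n-1}$, where $\hat e_i$ is orthonormal on the unit sphere. Then $dF$ sends $\pr_r$ to $f'(r)\pr_r$, of $g_1$-length $f'(r)$, and sends $h_0(r)^{-1}\hat e_i$ to $h_0(r)^{-1}\hat e_i$, of $g_1$-length $\lambda(r) := h_1(f(r))/h_0(r)$, and these images are $g_1$-orthogonal. So $dF_{(r,\theta)}$ has singular values $f'(r)$ (once) and $\lambda(r)$ ($n-1$ times). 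From $f' \geq 1$ and $h_0(r)^{k-1} = f'(r)h_1(f(r))^{k-1}$ we get $h_0(r) \geq h_1(f(r))$, hence $\lambda(r) \leq 1 \leq f'(r)$, so the $k$-dilation of $dF_{(r,\theta)}$ -- the product of its $k$ largest singular values -- equals $f'(r)\lambda(r)^{k-1} = f'(r)\,h_1(f(r))^{k-1}/h_0(r)^{k-1} = 1$. By continuity, and using ($\dagger$) to control $F$ across $r=0$, the $k$-dilation of $F$ is $\leq 1$ everywhere, so $\mathbf{M}(F_\# T) \leq \mathbf{M}(T)$ for every relative $k$-cycle $T$ in $(M^n_{h_0,R_0}, \pr M^n_{h_0,R_0})$.

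Changing variables $s = f(r)$ and using the same relation among $h_0, h_1, f$ gives
\[
\area(M^k_{h_0,R_0}) = \beta_{k-1}\int_0^{R_0} h_0(r)^{k-1}\,\d r = \beta_{k-1}\int_0^{R_0} f'(r)\,h_1(f(r))^{k-1}\,\d r = \beta_{k-1}\int_0^{R_1} h_1(s)^{k-1}\,\d s = \area(M^k_{h_1,R_1}).
\]
Now let $\Phi$ be any sweepout of $M^n_{h_0,R_0}$. Since $F$ is a degree-one diffeomorphism carrying boundary to boundary, $F_\#$ is a homeomorphism of the corresponding spaces of relative $\mathbb{Z}_2$-cycles preserving the cohomological sweepout condition, so $t\mapsto F_\#(\Phi(t))$ is a sweepout of $M^n_{h_1,R_1}$, with $\area(F_\#\Phi(t)) \leq \area(\Phi(t))$ for each $t$ by the dilation bound. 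Using the assumed equatorial estimate for $M^n_{h_1,R_1}$,
\[
\sup_t \area(\Phi(t)) \;\geq\; \sup_t \area(F_\#\Phi(t)) \;\geq\; \omega^k(M^n_{h_1,R_1}) \;\geq\; \area(M^k_{h_1,R_1}) \;=\; \area(M^k_{h_0,R_0}),
\]
and taking the infimum over sweepouts $\Phi$ yields $\omega^k(M^n_{h_0,R_0}) \geq \area(M^k_{h_0,R_0})$, i.e. $M^n_{h_0,R_0}$ has equatorial widths.

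The only genuinely delicate points are bookkeeping: that pushforward by the diffeomorphism $F$ is continuous on the space of relative $\mathbb{Z}_2$-cycles and preserves the condition $\Phi^*\alpha\neq 0$ (standard, as $F$ has degree one), and the behaviour of $F$ and its $k$-dilation at the center $r=0$, where the warped-product coordinates degenerate -- here one invokes ($\dagger$) to see that $F$ is an honest smooth diffeomorphism near the origin and that the dilation bound persists by continuity. Everything else is immediate from the hypotheses $f'\geq 1$ and $h_0^{k-1} = f'\,(h_1\circ f)^{k-1}$, which are precisely engineered so that $F$ contracts $k$-area while preserving equatorial $k$-area.
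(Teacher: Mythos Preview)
Your proof is correct and follows essentially the same approach as the paper's: construct the radial map $r\mapsto f(r)$, verify it contracts $k$-area, and use the change of variables $s=f(r)$ to match the equatorial areas. Your singular-value computation makes explicit what the paper records as the two inequalities $f'(r)h_1(f(r))^{k-1}\leq h_0(r)^{k-1}$ and $h_1(f(r))\leq h_0(r)$, and you spell out in more detail why $F$ is a diffeomorphism and why pushforward preserves the sweepout condition; otherwise the arguments are the same.
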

\begin{proof}
We may define a smooth map $\phi: M^n_{h_0, R_0} \to M^n_{h_1, R_1}$ by sending $r\mapsto f(r)$. The metric on $M^n_{h_i,R_i}$ is $g_i = \d r^2 + h_i(r)^2 g_{\mathbb{S}^{n-1}}$, so the pullback metric is $\phi^*g_1 = f'(r)^2 \d r^2 + h_1(f(r))^2 g_{\mathbb{S}^{n-1}}$. Therefore, to check that $\phi$ contracts $k$-areas, it suffices to note that 
\begin{enumerate}
\item $f'(r)h_1(f(r))^{k-1} \leq h_0(r)^{k-1}$; and 
\item $h_1(f(r)) \leq h_0(r)$. 
\end{enumerate}
The first item is actually an equality by supposition. The second then follows as $f'\geq 1$. 

Now since $\phi$ contracts $k$-areas, it is clear from the definition of width that $\omega^k(M^n_{h_0,R_0}) \geq \omega^k(M^n_{h_1,R_1})$. But since $M^n_{h_1,R_1}$ has equatorial widths, we have 
\begin{equation}
\begin{split}
 \omega^k(M^n_{h_1,R_1}) &\geq \area(M^k_{h_1,R_1}) =\area(\mathbb{S}^{k-1}) \int_0^{R_1} h_1(r)^{k-1} \d r \\& = \area(\mathbb{S}^{k-1}) \int_0^{R_0} h_0(r)^{k-1} \d r = \area(M^k_{h_0,R_0}),\end{split}
 \end{equation}
 where to get to the second line we have used the integral substitution $r \leftrightarrow f(r)$. This completes the proof. 
\end{proof}

\subsection{Widths of space form balls} We may now deduce the main theorem by comparison with the hemisphere: 

\begin{proof}[Proof of Theorem \ref{thm:main}]
We have already shown that the hemisphere has equatorial widths (Section \ref{sec:hemisphere}), so by scaling $\mathbb{S}^n_+(\sqrt{K_1})= M^n_{h_1, R_1}$ does too for any $K_1>0$. Here $h_1=\sn_{K_1}$ and $R_1=\frac{\pi}{2\sqrt{K_1}}$. The aim is to apply Proposition \ref{prop:comp} to deduce the result for the other space form balls. To do so, 
we will consider $K < K_1$, $h_0=\sn_K$ and define $f$ by \begin{equation}\label{eq:def-f}\int_0^s \sn_K^{k-1}(r)\d r= \int_0^{f(s)} \sn^{k-1}_{K_1}(r) \d r ,\end{equation} so that $f$ is the unique solution of $f'(r) \sn_{K_1}(f(r))^{k-1} = \sn_K^{k-1}(r)$, $f(0)=0$. Note that from this characterisation, it is clear that $f$ satisfies $(\dagger$). 

We also note that $R_1 \in \im f$: If $K\leq 0$, then the left hand side is unbounded, whilst if $K>0$ then \[\int_0^{\frac{\pi}{2\sqrt{K}}}  \sn_K^{k-1}(r)\d r = \frac{1}{\sqrt{K}^k} \int_0^\frac{\pi}{2} \sin^k(r)\d r > \int_0^{\frac{\pi}{2\sqrt{K_1}}}  \sn_{K_1}^{k-1}(r)\d r .\]

In fact, this implies that $R_0 = f^{-1}(R_1) < \frac{\pi}{2\sqrt{K}}$.

 In order to apply Proposition \ref{prop:comp}, it remains only to check that $f'(r)\geq 1$ for $r\in [0, R_0]$. The proof of this fact is deferred to Lemma \ref{lem:df-1}. Given that Proposition \ref{prop:comp} applies, we choose parameters depending on the sign of $K$: 

Case 1: $K=0$. By scaling, it suffices to show that $B^{n}_{R_0; 0}$ has equatorial widths for any particular $R_0>0$. So take $K_1=1$; then $R_0 =f^{-1}(\frac{\pi}{2})$, $h_0=r$. Proposition \ref{prop:comp} now implies that $M^n_{h_0, R_0} = B^{n}_{R_0; 0}$ has equatorial widths. 

Case 2: $K>0$. By scaling, it suffices to show that for each $\alpha \in (0,\frac{\pi}{2})$, there is some $K>0$ such that $B^{n}_{\frac{\alpha}{\sqrt{K}}; K}$ has equatorial widths. Take $K_1=1$ and $R_0 = \frac{\alpha}{\sqrt{K}}$ where \[K = \left( \frac{\int_0^\alpha \sin^{k-1}(r) \d r}{\int_0^\frac{\pi}{2} \sin^{k-1}(r) \d r} \right)^{\frac{2}{k}}\in(0,1)\] so that $f(\frac{\alpha}{\sqrt{K}}) = \frac{\pi}{2}$ by (\ref{eq:def-f}). With $h_0 = \sn_K$, Proposition \ref{prop:comp} implies that $M^n_{h_0, R_0} = B^{n}_{\frac{\alpha}{\sqrt{K}}; K}$ has equatorial widths. 

Case 3: $K<0$. By scaling, it suffices to take $K=-1$ and show that $B^{n}_{R; -1}$ has equatorial widths for all $R>0$. We set \[ K_1 = \left(\frac{\int_0^{\frac{\pi}{2}} \sin^{k-1}(r) \d r}{\int_0^R \sinh^{k-1}(r) \d r} \right)^{\frac{2}{k}}>0,\] so that $f(R) = \frac{\pi}{2\sqrt{K_1}}$ by (\ref{eq:def-f}). Thus with $h_0 = \sinh$ and $R_0=R$, Proposition \ref{prop:comp} now implies that $M^n_{h_0, R_0} = B^{n}_{R; -1}$ has equatorial widths. 

\end{proof}

\begin{lemma}
\label{lem:df-1}
Let $K_1>0$, $h_1= \sn_{K_1}$, $R_1 = \frac{\pi}{2\sqrt{K_1}}$ and take $K < K_1$, $h_0 = \sn_K$. Define $f$ by \[\int_0^s \sn_K^{k-1}(r)\d r= \int_0^{f(s)} \sn_{K_1}^{k-1}(r) \d r\]
and set $R_0 = f^{-1}(R_1)$. Then $f' \geq 1$ on $[0,R_0]$. 
\end{lemma}
\begin{proof}
If $k=1$, then $f(r)=r$ and the statement is trivial, so henceforth we assume $k\geq 2$. Define $F(r) = \frac{h_1(f(r))}{h_0(r)}$ so that $f'(r) = F(r)^{1-k}$; it suffices to show that $F\leq1$. By the definition of $f$, and since $\sn \sim r$ for small $r$, we have $f(r) \sim r$ and hence $F(r) \to 1$ as $r\to 0$. 

Suppose now, for the sake of contradiction, that $M:= \sup_{[0,R_0]} F >1$. Then $M = F(r_*)$ for some $r_* >0$. A straightforward calculation gives \[F'(r) = \frac{h_1'(f(r))f'(r)}{h_0(r)} - \frac{h_1(f(r)) h_0'(r)}{h_0(r)^2}.\]

The first derivative test gives $F'(r_*)\geq 0$ (with equality if $r_*<R_0$), hence $\frac{h_1'(f(r_*))}{h_0'(r_*)} \geq M^k$.

If $K\leq 0$, then this gives $M^k\leq\frac{h_1'(f(r_*))}{h_0'(r_*)} = \frac{\cos(f(r_*)\sqrt{K_1})}{\cosh (r_*\sqrt{|K|})} \leq 1 < M$, which is already absurd. 

On the other hand, suppose $K>0$. As above, we have $R_0 < \frac{\pi}{2\sqrt{K}}$, so in particular $F'(R_0)= \frac{\cos(R_1\sqrt{K_1})}{\cos (R_0\sqrt{K})}=\frac{\cos(\frac{\pi}{2})}{\cos (R_0\sqrt{K})}=0$. Then the first and second derivative tests apply (even if $r_*= R_0$) to give $F'(r_*)=0, F''(r_*) \leq 0$. Using $F'(r_*)=0$ gives $\frac{h_1'(f(r_*))}{h_0'(r_*)} =M^k$, and together with $F(r_*)=M$ these imply that \[F''(r_*) = \frac{M^{2-2k} h_1''(f(r_*)) - Mh_0''(r_*)}{h_0(r_*)}.\] But $h_0'' = -Kh_0$, $h_1'' =-K_1 h_1$, so the second derivative test gives \[0 \geq F''(r_*) = -K_1 M^{3-2k} + KM, \] hence $M^{2(k-1)} \leq  \frac{K_1}{K}$. But note that $f'\geq \frac{1}{M^{k-1}}$, so after integrating we have \[ \frac{f(r)}{r} \geq \frac{1}{M^{k-1}} \geq \left(\frac{K}{K_1}\right)^\frac{1}{2}.\] On the other hand, the first derivative test gave $\frac{h_1'(f(r_*))}{h_0'(r_*)} = \frac{\cos(f(r_*)\sqrt{K_1})}{\cos (r_*\sqrt{K})}=M^k >1$. Since $\cos$ is decreasing on $[0,\frac{\pi}{2}]$, we must have $f(r_*) \sqrt{K_1} < r_*\sqrt{K}$. Comparing to the lower bound for $\frac{f(r)}{r}$, we must have $\left(\frac{K}{K_1}\right)^\frac{1}{2}  < \left(\frac{K}{K_1}\right)^\frac{1}{2}$, which is absurd. 
\end{proof}

\section{Widths by the direct method}
\label{sec:direct}

In this section, we give a proof of Theorem \ref{thm:main} in the case $K=0$ which fulfils the direct approach outlined by Gromov in \cite{Gr83}. That is, we combine the min-max realisation of the width, Theorem \ref{thm:min-max}, with a lower bound for the area of any free boundary stationary varifold in $(B^n, \pr B^n)$. This lower bound, Theorem \ref{thm:fb-estimate}, generalises the results of Brendle \cite{Br12} to the varifold setting. We remark that this approach should in fact work for certain positive curvature cases, by generalising the estimates of Freidin-McGrath \cite{FM20, FM19}, for free boundary minimal surfaces in spherical balls, to stationary varifolds. We have chosen to detail the proof only for the Euclidean ball for simplicity and clarity of exposition. 

\subsection{Monotonicity and consequences}
\label{sec:mono}

A boundary monotonicity formula for free boundary stationary varifolds was proven in \cite{GLZ}; we state a special case for Euclidean domains:

\begin{proposition}[\cite{GLZ}]
\label{prop:monotonicity}
Let $\Omega^n$ be a compact domain in $\mathbb{R}^n$ with smooth boundary $\pr \Omega$. There exists $r_0= r_0(\Omega) \in (0,\infty]$ such that if $V \in \mathcal{SV}_k(\Omega)$ is a free boundary stationary varifold in $(\Omega,\pr\Omega)$, then for any $y\in \pr\Omega$ and any $0< s<t< r_0$, we have 
\begin{equation}
\frac{\|V\|(B_{t}(y))}{t^k}-\frac{\|V\|(B_{s}(y))}{s^k} \geq \int_{\Gr_k(A_{s,t}(y))} \frac{|D_S^\perp \rho|^2}{(1+\gamma \rho)\rho^k} \d V(x,S),
\end{equation}
where $\rho(x) = |x-y|$ and $\gamma = \frac{1}{r_0}$. 
\end{proposition}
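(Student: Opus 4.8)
The statement to prove is the boundary monotonicity formula of Proposition \ref{prop:monotonicity}, a result attributed to \cite{GLZ}. The plan is to adapt the classical interior monotonicity formula for stationary varifolds to the free boundary setting by a reflection-type argument combined with a careful choice of test vector field.

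First I would set up the standard machinery: for a fixed boundary point $y \in \pr\Omega$, write $\rho(x) = |x-y|$ and consider radial test vector fields of the form $X(x) = \psi(\rho(x))(x-y)$, where $\psi$ is a suitable cutoff in the radial variable. The key difficulty is that such an $X$ is \emph{not} tangent along $\pr\Omega$, so it is not an admissible test field for the free boundary stationarity condition (\ref{eq:stat}). The main obstacle — and the technical heart of the proof — is to correct $X$ to a vector field $\widetilde X \in \mathfrak{X}(\Omega, \pr\Omega)$ while controlling the error introduced. Near a boundary point, one writes $x - y$ in terms of its components tangent and normal to $\pr\Omega$; the normal component is $O(\rho^2)$ by smoothness of $\pr\Omega$ (since $y$ itself lies on $\pr\Omega$, the boundary is tangent to its tangent plane to second order). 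Subtracting off (a cutoff version of) this normal component produces an admissible field whose divergence differs from $\div_S X$ by a term that is $O(\rho)$ relative to the main term; this is precisely where the factor $(1+\gamma\rho)$ and the constant $r_0 = r_0(\Omega)$ enter — $r_0$ must be small enough that the boundary is graphical with controlled second fundamental form over each tangent plane, and $\gamma = 1/r_0$ bounds the resulting distortion.

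Next I would compute $\div_S X$ for the radial field. The standard computation gives $\div_S X(x) = k\,\psi(\rho) + \rho\psi'(\rho)|\nabla^S \rho|^2 = k\psi(\rho) + \rho\psi'(\rho)(1 - |D_S^\perp \rho|^2)$, where $D_S^\perp \rho$ denotes the component of $\nabla\rho$ orthogonal to the plane $S$. Plugging the corrected field $\widetilde X$ into $\int \div_S \widetilde X \, dV = 0$, then specializing $\psi$ to approximate the indicator-type combination $\psi_t(\rho) = \rho^{-k}$ truncated at scales $s$ and $t$ (the usual trick: take $\psi$ smooth, let it approach $\min(\rho,s)^{-k}$-type profiles, or directly use $\psi(\rho) = t^{-k}$ for $\rho < t$ interpolated down), one obtains after rearrangement the quantity $\|V\|(B_t(y))/t^k - \|V\|(B_s(y))/s^k$ on one side. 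The $\rho\psi'(\rho)|D_S^\perp\rho|^2$ term, which has a favorable sign when $\psi$ is decreasing, yields the integral $\int |D_S^\perp\rho|^2/\rho^k \, dV$; the error from the boundary correction degrades $\rho^{-k}$ to $((1+\gamma\rho)\rho^k)^{-1}$ and only weakens the inequality (turning an interior equality into the stated $\geq$), which is exactly the asymmetric form recorded in the proposition.

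Finally I would address the regularization: the test function $\psi$ must be Lipschitz (or smoothed and passed to the limit), and one uses that (\ref{eq:stat}) holds for Lipschitz $X \in \mathfrak{X}(\Omega,\pr\Omega)$, as noted in the excerpt. I would verify that all boundary terms vanish — this is automatic because $\widetilde X$ is tangent to $\pr\Omega$ and compactly supported in $B_{r_0}(y)$, so no genuine boundary integral appears; the free boundary condition has been fully absorbed into the admissibility of $\widetilde X$. The monotonicity in the differentiated form, $\frac{d}{dt}\big(\|V\|(B_t(y))/t^k\big) \geq (\text{nonneg.})$, then integrates from $s$ to $t$ to give the claimed inequality. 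I expect the construction and estimation of the corrected vector field $\widetilde X$ near the curved boundary, and the bookkeeping of the $O(\rho)$ error in terms of $\gamma = 1/r_0$, to be the step requiring the most care; everything else follows the template of the classical monotonicity formula.
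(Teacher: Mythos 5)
The paper itself does not prove Proposition \ref{prop:monotonicity} --- it is stated as a citation to \cite{GLZ} and no argument is given in the text --- so there is no ``paper proof'' to compare against; the right comparison is with the construction in \cite{GLZ}. Your sketch does correctly identify the main difficulty (the radial test field $X = \psi(\rho)(x-y)$ is not admissible for (\ref{eq:stat})), records the correct identity $\div_S X = k\psi(\rho) + \rho\psi'(\rho)(1-|D_S^\perp\rho|^2)$, and correctly explains where $r_0$ and $\gamma$ come from. But the specific mechanism you propose --- subtracting off the normal component of $X$ along $\pr\Omega$ (extended into the interior) --- is not what the cited reference does; the argument in \cite{GLZ}, following the Gr\"uter--Jost tradition, builds a tangential field by \emph{reflection}: one uses the nearest-point projection $\zeta$ onto $\pr\Omega$ to form the reflected point $\tilde x = 2\zeta(x)-x$ and the reflected radial function $\tilde\rho = |\tilde x - y|$, and tests with a combination of $\psi(\rho)(x-y)$ and a pushed-forward reflected radial field. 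This gives a field that is tangent along $\pr\Omega$ by symmetry rather than by ad hoc subtraction, and the distortion introduced by the differential of the reflection map is precisely what produces the $(1+\gamma\rho)$ loss. Your route may well be made to work, but it is a genuinely different construction, and it has one real gap worth flagging.

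The gap is the sign control. You assert that the boundary correction ``only weakens the inequality,'' but the correction term $\int \div_S Z\, dV$ does not \emph{a priori} have a fixed sign --- it involves the second fundamental form of $\pr\Omega$, which may be indefinite. The $(1+\gamma\rho)^{-1}$ factor is not an accidental weakening; it is exactly the quantitative device that absorbs a two-sided $O(\gamma\rho)$ error into a one-sided statement. Concretely, one must show the error is bounded by $\gamma\rho$ times the main terms and then check that replacing $\rho^{-k}$ on the right by $((1+\gamma\rho)\rho^k)^{-1}$, and replacing the would-be equality on the left by $\geq$, together soak up the worst-case sign. Your sketch asserts this conclusion without establishing the estimate that justifies it, and that estimate --- not the cutoff/limit bookkeeping, which is indeed routine --- is the actual content of the proposition. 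Relatedly, the extension of your normal correction $Z$ from $\pr\Omega$ into the interior (and the verification that $\div_S Z$ really is $O(\rho)$ relative to $\div_S X$, uniformly over $S \in \Gr(k,n)$ and not just for $S$ tangent to $\pr\Omega$) is left unexamined; this is where a direct subtraction argument tends to be more delicate than the reflection construction.
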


A similar boundary monotonicity was proven in \cite{GJ}. With such a monotonicity formula in hand, it follows that the \textit{density} \[\Theta^k(\|V\|,x) = \lim_{r\to 0} \frac{\|V\|(B_r(x))}{\alpha_k r^k}\] is well-defined everywhere, and since $V$ is integral, $\Theta^k(\|V\|,x)$ is an integer $\mu_V$-a.e. Moreover, the modified density \[\wt{\Theta}^k(\|V\|,x) := \begin{cases} \Theta^k(\|V\|,x), &x\notin \pr \Omega \\ 2\Theta^k(\|V\|,x), & x\in \pr \Omega\end{cases}\] is upper semicontinuous (see for instance \cite[Section 6.2]{dlR}). It follows that $\wt{\Theta}^k(\|V\|,x) \geq 1$ at every point $x\in\supp \|V\|$. 

\subsection{Stationary free boundary varifolds in the unit ball}

Here we extend Brendle's proof of Theorem \ref{thm:brendle} to the varifold setting. In particular, we will consider deformations by his cleverly constructed vector field 
\begin{equation} 
\label{eq:Y}
Y(x) = \frac{x}{2} - \frac{x-y}{|x-y|^k} - \frac{k-2}{2} \int_0^1 \frac{tx-y}{|tx-y|^k} \d t,
\end{equation}
where $y\in \pr B^n$. Its properties are summarised in the following lemma. 

Given a $k$-plane $S\subset \mathbb{R}^n$ we will write $\pi_S$ for the projection to $S$, and $\pi_S^\perp = \id - \pi_S$ the projection to the orthocomplement. For a function $f$ we set $D_S^T = \pi_S Df$ and $D_S^\perp f = \pi_S^\perp Df$. 

\begin{lemma}[\cite{Br12}]
\label{lem:vector-field}
Fix $1\leq k \leq n$. There is a non-decreasing function $h:[0,2]\to [0,\infty)$ such that for any $y\in \pr B^n$, the vector field $Y$ defined by (\ref{eq:Y}) is smooth on $\overline{B^n}\setminus \{y\}$ and satisfies:
\begin{enumerate}[(i)]
\item $\div_S Y(x) \leq \frac{k}{2} - \frac{k}{|x-y|^{k+2}}|\pi_{S}^\perp(x-y)|^2$ for any $(x,S) \in \Gr_k(\overline{B^n}\setminus\{y\})$; 
\item $Y|_{\pr B^n\setminus \{y\}}$ is tangent to $\pr B^n$;
\item $ \left| Y(x) + \frac{x-y}{|x-y|^k} \right| \leq \frac{h(|x-y|)}{|x-y|^{k-1}}$ for $x\in \overline{B^n}\setminus \{y\}$;
\item $\lim_{t\to 0} h(t)=0$. 
\end{enumerate}
\end{lemma}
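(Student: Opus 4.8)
\textbf{Proof proposal for Lemma \ref{lem:vector-field}.}

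The plan is to verify the four claimed properties of $Y$ directly from formula (\ref{eq:Y}), exploiting the special structure of each of the three terms. I would begin by setting $v = x-y$ and recording the basic derivative identities: for the Newtonian-type kernel one has $D\left(\frac{v}{|v|^k}\right) = \frac{1}{|v|^k}\id - \frac{k}{|v|^{k+2}} v\otimes v$, so that $\div_S\left(\frac{v}{|v|^k}\right) = \frac{k}{|v|^k} - \frac{k}{|v|^{k+2}}|\pi_S v|^2 = \frac{k}{|v|^{k+2}}|\pi_S^\perp v|^2$ when restricted to a $k$-plane $S$ (here one uses $|\pi_S v|^2 + |\pi_S^\perp v|^2 = |v|^2$ and $\tr_S \id = k$). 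The first term $\frac{x}{2}$ contributes $\div_S\left(\frac{x}{2}\right) = \frac{k}{2}$, and the integral term is handled by differentiating under the integral sign, using $D_x\left(\frac{tx-y}{|tx-y|^k}\right) = \frac{t}{|tx-y|^k}\id - \frac{kt}{|tx-y|^{k+2}}(tx-y)\otimes(tx-y)$. Assembling these and discarding the manifestly nonnegative contribution of the integral term (its $S$-divergence is $\int_0^1 \frac{t}{|tx-y|^{k+2}}\big(k|tx-y|^2 - kt^2|\pi_S(tx-y)|^2 - \text{(correction)}\big)\,\d t$, which after the same orthogonal decomposition is $\geq 0$ when $k\geq 2$, and vanishes identically when $k=1$ since the prefactor $\frac{k-2}{2}$ is then negative but the bracket changes sign — this needs care, see below) yields the pointwise bound (i).

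For (ii), I would check that $Y$ is tangent to the unit sphere at points $x$ with $|x|=1$, i.e. that $\langle Y(x), x\rangle = 0$ there. With $|x|=1$ and $|y|=1$ one computes $\langle \frac{x}{2}, x\rangle = \frac12$, and using $|x-y|^2 = 2 - 2\langle x,y\rangle$ one gets $\langle \frac{x-y}{|x-y|^k}, x\rangle = \frac{1 - \langle x,y\rangle}{|x-y|^k} = \frac{|x-y|^2/2}{|x-y|^k} = \frac{1}{2|x-y|^{k-2}}$; similarly $|tx-y|^2 = t^2 - 2t\langle x,y\rangle + 1$ gives $\frac{d}{dt}|tx-y|^2 = 2(t - \langle x,y\rangle)$, so $\langle tx-y, x\rangle = t - \langle x,y\rangle = \frac12\frac{d}{dt}|tx-y|^2$, whence $\langle \frac{tx-y}{|tx-y|^k}, x\rangle = \frac{1}{2|tx-y|^k}\frac{d}{dt}|tx-y|^2 = -\frac{1}{k-2}\frac{d}{dt}\frac{1}{|tx-y|^{k-2}}$ (for $k\neq 2$), so the integral $\int_0^1$ telescopes to $-\frac{1}{k-2}\left(\frac{1}{|x-y|^{k-2}} - \frac{1}{|y|^{k-2}}\right) = -\frac{1}{k-2}\left(\frac{1}{|x-y|^{k-2}} - 1\right)$. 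Multiplying by $-\frac{k-2}{2}$ and summing all three contributions, the $\frac{1}{|x-y|^{k-2}}$ terms cancel and one is left with $\frac12 - \frac12 = 0$; the case $k=2$ follows by taking a limit or treating the logarithmic primitive directly. Property (iii) is immediate from (\ref{eq:Y}): $Y(x) + \frac{x-y}{|x-y|^k} = \frac{x}{2} - \frac{k-2}{2}\int_0^1 \frac{tx-y}{|tx-y|^k}\,\d t$, and since $|x|\leq 1$ and $|tx-y| \geq \big||y| - t|x|\big|$ may be small, one bounds $\left|\int_0^1 \frac{tx-y}{|tx-y|^k}\,\d t\right|$ by $\int_0^1 |tx-y|^{1-k}\,\d t$; a change of variables along the segment from $-y$ to $x-y$ shows this is $O(|x-y|^{2-k})$ when $k>2$, $O(\log\frac{1}{|x-y|})$ when $k=2$, and $O(1)$ when $k<2$, and in every case the resulting quantity, once multiplied through, is bounded by $\frac{h(|x-y|)}{|x-y|^{k-1}}$ for an explicit non-decreasing $h$ with $h(t)\to 0$ as $t\to 0$, which also gives (iv).

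The main obstacle I anticipate is the sign bookkeeping in (i) — specifically, showing that the divergence of the integral term has the right sign to be \emph{discarded} uniformly in both $k\geq 2$ and the boundary cases, and simultaneously that the curvature-flavoured negative term $-\frac{k}{|x-y|^{k+2}}|\pi_S^\perp(x-y)|^2$ survives with the correct constant. This requires carefully expanding $\div_S\left(\frac{tx-y}{|tx-y|^k}\right)$, noting that $\pi_S^\perp$ acts on the $x$-variable so that $\pi_S^\perp(tx-y) = t\pi_S^\perp x - \pi_S^\perp y$ is \emph{not} simply a rescaling of $\pi_S^\perp(x-y)$, and then checking that the cross terms integrate to something nonnegative; this is the one place where Brendle's choice of the precise coefficient $\frac{k-2}{2}$ and the precise integrand is essential, and where I would be most careful to follow \cite{Br12} closely rather than reconstruct the computation from scratch. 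A secondary (routine) point is justifying differentiation under the integral sign and the smoothness of $Y$ on $\overline{B^n}\setminus\{y\}$, which follows since the only singularity of each integrand is at $t = 0$ paired with $x = y$, away from which everything is smooth and the $t$-integral converges.
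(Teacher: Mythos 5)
The paper does not actually prove this lemma --- it is imported verbatim from Brendle (\cite{Br12}, Lemma~8), and the only original content the paper supplies is the Remark immediately following the statement, which observes that Brendle's Lemma~8 is stated for $k\geq 2$ and then patches the $k=1$ case of (iii)--(iv) by exploiting the exact cancellation $\frac{x}{2} + \frac{1}{2}\int_0^1 \frac{tx-y}{|tx-y|}\,\d t \to \frac{y}{2} - \frac{y}{2} = 0$ via dominated convergence. So your attempt to re-derive the whole lemma from first principles is doing genuinely different work than the paper, and the overall plan (compute $\div_S$ termwise; verify tangency by pairing with the radial vector; estimate the remainder) is reasonable and matches Brendle's own approach for $k\geq 2$. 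Your verification of (ii), integrating $\langle tx-y,x\rangle = \tfrac12\tfrac{d}{dt}|tx-y|^2$ and watching the $|x-y|^{2-k}$ contributions cancel, is correct.

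There are, however, two concrete gaps, both at $k=1$ --- precisely the case Brendle excludes and which the paper singles out. First, in (i): once one computes $\div_S^x\!\bigl(\frac{tx-y}{|tx-y|^k}\bigr) = \frac{kt}{|tx-y|^{k+2}}|\pi_S^\perp(tx-y)|^2 \geq 0$, the integral term contributes $-\frac{k-2}{2}\int_0^1 (\text{nonnegative})\,\d t$ to $\div_S Y$, which is $\leq 0$ exactly when $k\geq 2$. For $k=1$ the prefactor becomes $+\tfrac12$, so this contribution is \emph{nonnegative}: it does not ``vanish identically'' and the bracket does not ``change sign.'' Indeed, taking $n=2$, $k=1$, $y=(1,0)$, $x=0$, $S=\operatorname{span}(e_2)$ gives $\div_S Y = -\tfrac14$ while the right side of (i) evaluates to $\tfrac12 - 1 = -\tfrac12$, so (i) as written actually fails at $k=1$. (This is a small wrinkle in the paper's statement too; for $k=1$ the mass bound in Theorem~\ref{thm:fb-estimate} only uses the weaker $\div_S Y\leq\tfrac{k}{2}$, which does hold here, and the rigidity case is elementary.) Second, in (iii)--(iv) at $k=1$: you bound $|\tfrac{x}{2}|$ and $|\tfrac{k-2}{2}\int_0^1 \frac{tx-y}{|tx-y|^k}\,\d t|$ separately, each $O(1)$, but (iii) with $h(t)\to 0$ requires the \emph{sum} to be $o(1)$ as $x\to y$; a triangle-inequality bound cannot detect the necessary cancellation between the two terms, which is exactly what the paper's dominated-convergence Remark supplies and what you would need to add.
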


\begin{remark}
Strictly, the proof of \cite[Lemma 8]{Br12} does not cover the case $k=1$. However, in this case note that $Y(x) + \frac{x-y}{|x-y|} = \frac{x}{2} + \frac{1}{2} \int_0^1 \frac{tx-y}{|tx-y|} \d t.$ By dominated convergence, $\int_0^1 \frac{tx-y}{|tx-y|} \d t \to -\frac{y}{|y|}=-y$ as $x\to y$, so still $Y(x) + \frac{x-y}{|x-y|}\to 0$ as desired. 
\end{remark}

We now proceed to the main estimate:

\begin{theorem}
\label{thm:fb-estimate}
Let $V\in \mathcal{SV}_k(B^n)$ be a nontrivial, integral, free boundary stationary $k$-varifold in the unit ball $(B^n, \pr B^n)$. Then \[ \mathbf{M}(V)  \geq \area(B^k).\] Moreover, if equality holds, then $\supp\|V\|$ is contained in a $k$-dimensional subspace. 
\end{theorem}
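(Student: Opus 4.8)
The plan is to mimic Brendle's variational argument from \cite{Br12}, but with the smooth submanifold $\Sigma$ replaced by the varifold $V$ and the divergence theorem replaced by the stationarity identity \eqref{eq:stat}. Fix a point $y\in\pr B^n$. The vector field $Y$ of \eqref{eq:Y} is smooth on $\overline{B^n}\setminus\{y\}$, tangent to $\pr B^n$ away from $y$ (Lemma \ref{lem:vector-field}(ii)), so after cutting off near $y$ it becomes an admissible test field in $\mathfrak{X}(B^n,\pr B^n)$. Concretely, I would pick a cutoff $\chi_\eps$ vanishing on $B_\eps(y)$ and equal to $1$ outside $B_{2\eps}(y)$, apply \eqref{eq:stat} to $\chi_\eps Y$, and estimate the error terms. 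The term involving $\div_S(\chi_\eps Y)$ splits into $\chi_\eps\div_S Y$ plus a term supported on the annulus $A_{\eps,2\eps}(y)$ where $|D\chi_\eps|\lesssim\eps^{-1}$ and $|Y|\lesssim\eps^{1-k}$ (by Lemma \ref{lem:vector-field}(iii), since $h$ is bounded); this error is bounded by $C\eps^{-k}\|V\|(B_{2\eps}(y))$, which stays bounded as $\eps\to0$ by the monotonicity formula Proposition \ref{prop:monotonicity} (indeed it converges to a multiple of the density $\Theta^k(\|V\|,y)$, with a factor $\tfrac12$ when $y\in\pr B^n$, by the precise form of (iii)–(iv)).

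Passing to the limit $\eps\to0$ and using the divergence bound Lemma \ref{lem:vector-field}(i), stationarity gives
\begin{equation}
0 = \int \div_S Y\,\d V(x,S) + (\text{boundary term at }y) \leq \frac{k}{2}\mathbf{M}(V) - k\int \frac{|\pi_S^\perp(x-y)|^2}{|x-y|^{k+2}}\,\d V(x,S) - c_n\,\wt\Theta^k(\|V\|,y),
\end{equation}
where the last term records the concentration of $-\tfrac{x-y}{|x-y|^k}$ at $y$ and $c_n=\tfrac12\beta_{n-1}$ or similar (matching Brendle's constant; the factor reflecting the doubled boundary density). Since $\wt\Theta^k(\|V\|,y)\geq1$ at every $y\in\supp\|V\|\cap\pr B^n$ — and one checks that $\supp\|V\|$ must meet $\pr B^n$, else $V$ is a stationary varifold in the interior, forcing $\mathbf{M}(V)=\infty$ or triviality — we may choose such a $y$ and drop the nonnegative integral term to conclude $\mathbf{M}(V)\geq \tfrac{2c_n}{k}=\alpha_k=\area(B^k)$.

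For the rigidity statement, suppose $\mathbf{M}(V)=\area(B^k)$. Then all the inequalities above are equalities: the divergence bound (i) is saturated $V$-a.e., the integral $\int|\pi_S^\perp(x-y)|^2|x-y|^{-k-2}\,\d V$ vanishes, and $\wt\Theta^k(\|V\|,y)=1$. The vanishing integral forces $x-y\in S$ for $V$-a.e.\ $(x,S)$; running this for a suitable choice of (or, by a density/continuity argument, for every) $y\in\supp\|V\|\cap\pr B^n$, one deduces that the tangent plane $S$ is constant and that $\supp\|V\|$ lies in the affine subspace through $y$ spanned by $S$ — which, since $0$ lies on the relevant equatorial ball, is a linear $k$-dimensional subspace. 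This last deduction is essentially Brendle's rigidity argument, transplanted to the varifold setting using the constancy theorem and the equality case of monotonicity.

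The main obstacle I anticipate is making the cutoff/limit analysis near $y$ fully rigorous: one must verify that $\chi_\eps Y$ is genuinely an admissible (Lipschitz, tangent along $\pr B^n$) vector field, that the annular error term really does converge — not merely stay bounded — and that its limit is exactly the density term with the correct constant, including the boundary doubling factor $\wt\Theta$ versus $\Theta$. This requires carefully combining Lemma \ref{lem:vector-field}(iii)–(iv) with the monotonicity formula to control $\|V\|$ on small balls centred at $y\in\pr B^n$, and it is the step where the varifold proof genuinely diverges from Brendle's smooth computation.
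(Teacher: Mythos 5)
Your plan coincides with the paper's proof: test free-boundary stationarity against Brendle's field $Y$, cut off near a boundary point $y\in\supp\|V\|\cap\pr B^n$, and send the cutoff scale to zero using the boundary monotonicity of Proposition~\ref{prop:monotonicity} to extract the density contribution $\Theta^k(\|V\|,y)\geq\tfrac12$. Two points to tighten: the boundary constant is $\tfrac{k\alpha_k}{2}$, not $\tfrac12\beta_{n-1}$ (it is governed by the dimension $k$ of the varifold, not $n$); and when you expand the annular cutoff term $\int\langle D\chi_\eps,\pi_S Y\rangle\,\d V$ using $Y\approx-\rho^{1-k}D\rho$, you obtain $-\eta'(\rho)\rho^{1-k}\bigl(1-|D_S^\perp\rho|^2\bigr)$; discarding the $|D_S^\perp\rho|^2$ excess piece in the limit is precisely what the excess integral on the right-hand side of the monotonicity formula is needed for, and the paper handles this with a two-parameter cutoff $\eta_{r,\epsilon}$ and a nested $r\to0$, then $\epsilon\to0$ limit, which your single-scale cutoff should be adjusted to replicate. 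For rigidity, rather than the constancy theorem, the paper runs the equality case $\pi_S^\perp(x-y)=0$ over \emph{all} $y\in\mathcal K=\supp\|V\|\cap\pr B^n$ and combines it with the convex hull property of $\supp\|V\|$ to pin down a $k$-plane.
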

\begin{proof}


Let $\mathcal{K}=\supp\|V\|\cap \pr B^n$, so that $V$ is stationary in $\mathbb{R}^n\setminus \mathcal{K}$. As $V$ has compact support, by the convex hull property \cite[Theorem 19.2]{Si83}, $\supp\|V\|$ is contained in the convex hull of $\mathcal{K}$. In particular, $\mathcal{K}$ cannot be empty. Note that as in Section \ref{sec:mono}, any $y\in \mathcal{K}$ satisfies $\Theta^k(\|V\|,y)\geq \frac{1}{2}$.

So fix $y \in K$ and set $\rho(x) = |x-y|$. We note that $D\rho = \frac{x-y}{|x-y|}$. Now let $r,\epsilon>0$ such that $(1+\epsilon)r<r_0$, where $r_0$ is as in Proposition \ref{prop:monotonicity}. Consider the piecewise linear cutoff function \[\eta_{r,\epsilon} = \begin{cases} 0, & t\leq r \\ \frac{t-r}{\epsilon r} ,& r\leq t \leq (1+\epsilon)r \\ 1 ,& t\geq (1+\epsilon)r \end{cases}.\] 

Henceforth we write $\eta=\eta_{r,\epsilon}$, suppressing the dependence on $r,\epsilon$. 

Using Lemma \ref{lem:vector-field}(ii), the vector field $\eta(\rho)Y$ is tangent along $\pr B^n$. So since $V$ is free boundary stationary, we may apply (\ref{eq:stat}) to find
\begin{equation}
\label{eq:fb-1}
\begin{split}
0&= \int \div_S(\eta(\rho)Y) \d V(x,S)
\\&= \int \left( \eta(\rho) \div_S Y + \eta'(\rho)\langle D_S^T \rho, Y\rangle \right) \d V(x,S).
\end{split}
\end{equation}

Recall that Lemma \ref{lem:vector-field}(iii) gives $\left| Y + \frac{D\rho}{\rho^{k-1}}\right| \leq \frac{h(\rho)}{\rho^{k-1}}$. Then since $|D\rho|=1$, 
\begin{equation}
\begin{split}
\langle \pi_S D\rho, Y\rangle &\leq - \rho^{1-k}\langle D_S^T \rho, D\rho\rangle + \rho^{1-k}h(\rho)
\\& = -\rho^{1-k}(1- |D_S^\perp \rho|^2) + \rho^{1-k}h(\rho).
\end{split}
\end{equation}

Therefore we have
\begin{equation}
\label{eq:fb-1}
\begin{split}
\int \eta'(\rho) \rho^{1-k} \d V(x,S) \leq & \int \eta(\rho)\div_S Y \d V(x,S) + \int \eta'(\rho) \rho^{1-k} |D_S^\perp \rho|^2 \d V(x,S) \\&+ \int \eta'(\rho) \rho^{1-k} h(\rho) \d V(x,S). 
\end{split}
\end{equation}

We proceed to estimate each of the terms in (\ref{eq:fb-1}).

First by Lemma \ref{lem:vector-field}(i) we have \begin{equation}\int \eta(\rho)\div_S Y \d V(x,S) \leq \frac{k}{2} \mathbf{M}(V).\end{equation}

By boundary monotonicity, Proposition \ref{prop:monotonicity} we have \begin{equation*}
\begin{split}
\int \eta'(\rho) \rho^{1-k} \d V(x,S) &=\frac{1}{\epsilon r}\int_{A_{r,(1+\epsilon)r}(y)} \rho^{1-k} \d \|V\| \geq  \frac{1}{\epsilon }r^{-k}(1+\epsilon)^{1-k} \|V\|(A_{r,(1+\epsilon)r}(y))
\\& \geq \frac{1}{\epsilon }(1+\epsilon)^{1-k} \left((1+\epsilon)^k 
 -1\right) \frac{\|V\|(B_r(y))}{r^k}.
\end{split}
\end{equation*}

Taking the limit as $r\to 0$, we find 
\begin{equation}
\label{eq:fb-2}
\lim_{r\to 0} \int \eta'(\rho) \rho^{1-k} \d V(x,S) \geq (1+\epsilon)^{1-k} \frac{(1+\epsilon)^k-1}{\epsilon} \alpha_k\Theta^k(\|V\|,y). 
\end{equation}

Again by Proposition \ref{prop:monotonicity} we have \begin{equation*}
\begin{split}
\int \eta'(\rho) \rho^{1-k} |D_S^\perp \rho|^2 \d V(x,S) &\leq \frac{1}{\epsilon r} \int_{\Gr_k(A_{r,(1+\epsilon)r}(y))} \frac{|D_S^\perp\rho|^2}{\rho^{k-1}} \d V(x,S) 
\\&\leq \frac{1}{\epsilon }(1+\epsilon) (1+\gamma r(1+\epsilon))  \int_{\Gr_k(A_{r,(1+\epsilon)r}(y))} \frac{|D_S^\perp\rho|^2 
}{\rho^{k}(1+\gamma \rho)} \d V(x,S)  
\\&= \frac{1}{\epsilon }(1+\epsilon) (1+\gamma r(1+\epsilon)) \left( 
\frac{\|V\|(B_{(1+\epsilon)r}(y))}{r^k(1+\epsilon)^k} -
 \frac{\|V\|(B_r(y))}{r^k}\right). 
\end{split}
\end{equation*}

Taking the limit as $r\to 0$, we find 
\begin{equation}
\label{eq:fb-3}
\lim_{r\to 0}\int \eta'(\rho) \rho^{1-k} |D_S^\perp \rho|^2 \d V(x,S) \leq \frac{1+\epsilon}{\epsilon}(\alpha_k\Theta^k(\|V\|,y)-\alpha_k\Theta^k(\|V\|,y))=0. 
\end{equation}

Finally, we have \begin{equation*}\int \eta'(\rho) \rho^{1-k} h(\rho) \d V(x,S) =\frac{1}{\epsilon r} \int_{A_{r,(1+\epsilon)r}(y)}  \rho^{1-k} h(\rho) \d\|V\| \leq \frac{r^{-k}}{\epsilon} h(r) \|V\|(B_{(1+\epsilon)r}(y)). \end{equation*}

Taking the limit as $r\to 0$, by Lemma \ref{lem:vector-field}(iv) we have
\begin{equation}
\label{eq:fb-4}
\lim_{r\to 0}\int \eta'(\rho) \rho^{1-k} h(\rho) \d V(x,S)  \leq  \frac{(1+\epsilon)^k}{\epsilon}\alpha_k\Theta^k(\|V\|,y) \lim_{r\to 0} h(r) =0.
\end{equation}

Thus, taking the $r\to 0$ limit in (\ref{eq:fb-1}) and using the estimates (\ref{eq:fb-2}-\ref{eq:fb-4}), we have
\begin{equation}
0\leq \frac{k}{2}\mathbf{M}(V) - (1+\epsilon)^{1-k} \frac{(1+\epsilon)^k-1}{\epsilon} \alpha_k\Theta^k(\|V\|,y). 
\end{equation}

Taking the limit as $\epsilon\to 0$ then gives \begin{equation}
0\leq \frac{k}{2} \mathbf{M}(V) - k\alpha_k \Theta^k(\|V\|,y),\end{equation} which implies the desired estimate as $\Theta^k(\|V\|,y)\geq \frac{1}{2}$. 

Finally, if $\mathbf{M}(V) = \alpha_k$, then from the above and Lemma \ref{lem:vector-field}(i), we must have $\pi_S^\perp(x-y)=0$ for $V$-a.e. $(x,S)$, and any $y\in \mathcal{K}$. Then $\pi_S^\perp(y_1-y_2)=0$ for $V$-a.e. $(x,S)$ and any $y_1,y_2\in \mathcal{K}$. Therefore the affine span $S_\mathcal{K} = \spa \{y_1-y_2 | y_1,y_2\in \mathcal{K}\}$ satisfies $S_\mathcal{K} \subset S$ for $V$-a.e. $(x,S)$. 

But recall that $\supp \|V\|$ is contained in the convex hull of $\mathcal{K}$. Since $\mathbf{M}(V) \neq 0$, it follows that the convex hull of $\mathcal{K}$, and hence $S_\mathcal{K}$, must be at least $k$-dimensional. In particular it must be exactly $k$-dimensional, and $S = S_\mathcal{K}$ for $V$-a.e. $(x,S)$. This implies that $\supp \|V\|$ is contained in a $k$-dimensional affine subspace (parallel to $S_K$). 
\end{proof}

\begin{remark}
In Brendle's approach, the vector field $Y$ corresponds to the gradient of the Green's function for the ($k$-dimensional) Neumann problem centred at a boundary point $y$. The corresponding gradient field for interior points $y$ is
\begin{equation}\label{eq:alt-W} Y := \frac{1}{2}x -  \frac{1}{2}\frac{x-y}{|x-y|^k} -  \frac{1}{2|y|^{k-2}}\frac{x-\frac{y}{|y|^2}}{|x-\frac{y}{|y|^2}|^k} - \frac{k-2}{2}\int_0^{|y|} \frac{tx-\frac{y}{|y|}}{|tx-\frac{y}{|y|}|^k} \d t.\end{equation} Given $y\in B^n$ and a free boundary minimal submanifold $\Sigma^k$ in $(B^n, \pr B^n)$, applying essentially the same arguments as in \cite{Br12} yields \[\area(\Sigma^k) \geq \Theta^k(\Sigma, y) \area(B^k),\] recovering the same area estimate as Theorem \ref{thm:brendle}. In fact, for $y\in \Sigma \cap B^n$ we obtain the excess estimate
\[ \int_\Sigma \frac{|(x-y)^\perp|^2}{|x-y|^{k+2}} \leq \area(\Sigma^k) - \area(B^k).\] 
A similar excess estimate was used in \cite{BV18} to prove a gap for the area of non-flat free boundary minimal submanifolds in $(B^n,\pr B^n)$. 

Note that the approach using the alternative vector field (\ref{eq:alt-W}) also generalises to the varifold setting; indeed since $y$ is in the interior of $B^n$ one may proceed with the same cutoff method as for the classical monotonicity formula \cite{Si83}. 
\end{remark}

\subsection{Width of the ball}

We now deduce the width of the Euclidean ball from the above estimate, using the min-max characterisation Proposition \ref{thm:min-max}: 

\begin{proof}[Alternative proof of Theorem \ref{thm:main} ($K=0$)]
Up to scaling we may assume $R=1$. By considering a sweepout of $B^n$ by intersections with parallel $k$-dimensional affine subspaces, it is clear that $\omega^k(B^n) \leq \area(B^k)$. Let $V\in \mathcal{SV}_k(B^n)$ be a nontrivial, integral, free boundary stationary varifold realising the $k$-dimensional width $\omega^k(B^n) = \mathbf{M}(V)$. 
Theorem \ref{thm:fb-estimate} then implies $\omega^k(B^n) \geq \area(B^k)$ and hence the result. 
\end{proof}

\section{Estimates for free boundary minimal hypersurfaces} 
\label{sec:estimates}

In this section, we observe that the width gives a lower bound for the area of a free boundary minimal hypersurfaces in a space form ball. When $K=0$ this bound follows from the direct approaches of Brendle \cite{Br12}, and for $K>0$ one expects these to follow from the methods of Freidin-McGrath \cite{FM20, FM19}. However, to the author's knowledge, these sharp lower bounds have not appeared in the literature for $K<0$. 

\subsection{Stability of free boundary minimal hypersurfaces}

We briefly recall the concept of stability for free boundary minimal hypersurfaces. For more details the reader may consult \cite{ACS} and references therein. 

Let $M^n$ be a smooth, compact Riemannian manifold with boundary and suppose $\Sigma^{n-1}$ is a two-sided free boundary minimal hypersurface in $(M,\pr M)$. Let $\nu$ be the unit normal on $\Sigma$ and $\eta$ the outer unit conormal of $\pr \Sigma$ in $\Sigma$. Given any smooth function $\phi$ on $\Sigma$, the vector field $\phi \nu$ may be extended to a vector field on $M$ which is tangent to $\pr M$, and thereby generates a flow of $M$. The second variation of area along this flow is given by the quadratic form
\begin{equation}
\label{eq:stab-1}
Q(\phi,\phi) = \int_\Sigma \left( |\nabla^\Sigma \phi|^2 - (\Ric^M(\nu,\nu) + |A|^2)\phi^2\right) - \int_{\pr \Sigma} k^{\pr M}(\nu,\nu) \phi^2,
\end{equation}
where $A$ is the second fundamental form of $\Sigma$ in $M$, and $k^{\pr M}$ is the second fundamental form of $\pr M$ in $M$. If $\phi$ is smooth then integrating by parts gives 
\begin{equation}
\label{eq:stab-2}
Q(\phi,\phi) = -\int_\Sigma \phi L_\Sigma\phi + \int_{\pr \Sigma} \phi(\pr_\eta - k^{\pr M}(\nu,\nu)) \phi,
\end{equation} where the associated linear operator is $L_\Sigma= \Lap_\Sigma + \Ric^M(\nu,\nu) + |A|^2$, and $L^2(\Sigma)$ admits an orthonormal basis of eigenfunctions for the associated eigenvalue problem 

\[
\begin{cases}
L\phi = -\lambda \phi , & \text{on }  \Sigma\\
\pr_\eta \phi = k^{\pr M}(\nu,\nu)\phi , & \text{on } \pr \Sigma.
\end{cases}
\]

From this variational characterisation and the Harnack inequality, it follows that $\lambda_1$ is a simple eigenvalue and its associated eigenfunction $\phi_1$ may be taken to be strictly positive in the interior of $\Sigma$. 

The hypersurface $\Sigma$ is \textit{stable} if $Q(\phi,\phi)\geq 0$, or equivalently if the least eigenvalue $\lambda_1 = \inf_{\phi\neq 0} \frac{Q(\phi,\phi)}{\int_\Sigma \phi^2}$ of the above system is nonnegative. We say that $\Sigma$ is \textit{unstable} if it is not stable.

\subsection{Lower bounds via the width}

First, we observe that the width gives a lower bound in spaces which do not admit stable free boundary minimal hypersurfaces. The idea is, given a free boundary minimal hypersurface $\Sigma$, to construct a foliation whose maximal slice is $\Sigma$. 

\begin{proposition}
\label{prop:width-lb}
Let $M^n$ be a smooth, orientable compact Riemannian manifold with boundary. Suppose that there are no stable free boundary minimal hypersurfaces in $(M, \pr M)$. Then any embedded free boundary minimal hypersurface $\Sigma^{n-1}$ in $(M,\pr M)$ has $\area(\Sigma) \geq \omega^k(M)$. 
\end{proposition}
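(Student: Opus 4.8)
The plan is to use the free boundary minimal hypersurface $\Sigma$ itself as the maximal slice of a sweepout of $M$, so that the definition of the width immediately gives $\omega^{n-1}(M) \leq \area(\Sigma)$. Since $\Sigma$ is embedded and two-sided (here using orientability of $M$), a tubular neighbourhood of $\Sigma$ in $M$ can be identified with $\Sigma \times (-\epsilon,\epsilon)$, with $\Sigma \times \{0\} = \Sigma$, and in this identification the slices $\Sigma_t = \Sigma \times \{t\}$ together with their images under a normal flow foliate a neighbourhood. The key point is to arrange that, after possibly perturbing within the normal flow, the areas of the nearby slices do not exceed $\area(\Sigma)$, so that $\Sigma$ is the genuine maximum.

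First I would invoke the stability hypothesis: since there are \emph{no} stable free boundary minimal hypersurfaces in $(M,\pr M)$, the hypersurface $\Sigma$ is unstable, so the lowest eigenvalue $\lambda_1 < 0$ of the Jacobi operator $L_\Sigma$ with the Robin boundary condition $\pr_\eta\phi = k^{\pr M}(\nu,\nu)\phi$ is negative, with a first eigenfunction $\phi_1 > 0$ in the interior of $\Sigma$. Flowing $\Sigma$ by the vector field $\phi_1\nu$ (extended to be tangent along $\pr M$, as in the stability discussion), the first variation of area vanishes because $\Sigma$ is minimal with free boundary, and the second variation is $Q(\phi_1,\phi_1) = \lambda_1\int_\Sigma \phi_1^2 < 0$. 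Hence for the one-parameter family $\Sigma_t$ obtained from this flow, $\area(\Sigma_t) < \area(\Sigma)$ for all small $t \neq 0$, and $\Sigma$ is a strict local max of area along this family.

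Next I would extend this local family to a global sweepout of $M$. Since $\phi_1 > 0$ in the interior of $\Sigma$, the flow moves $\Sigma$ strictly to one side near interior points; one continues each slice $\Sigma_t$ for $t>0$ and $t<0$ (for instance by continuing the normal flow, or by any continuous family of hypersurfaces/cycles filling up the two components of $M \setminus \Sigma$) and caps off at the two ``ends'' with the trivial cycle, so that the union of all slices covers $M$ and the family represents a nontrivial sweepout, i.e.\ detects the generator $\alpha \in H^1(\mathcal{Z}_{n-1,rel}(M,\pr M;\mathbb{Z}_2);\mathbb{Z}_2)$. If the continued slices at some point have area exceeding $\area(\Sigma)$, one discards that portion: more carefully, for a sweepout it suffices to fill each of the (at most two) connected components $M^\pm$ of $M\setminus \Sigma$ by a family of relative cycles starting from $\Sigma$ and ending at the empty cycle; one can take, e.g., level sets of the distance function to $\Sigma$ within $M^\pm$, whose areas are controlled, but the cleanest route is to use the standard fact (as in the Almgren--Pitts / Guth framework, cf.\ \cite{Gu09,LMN}) that \emph{any} embedded two-sided hypersurface which locally maximizes area in a sweepout direction sits as the top slice of some admissible sweepout. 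Then $\omega^{n-1}(M) = \inf \sup \area \leq \sup_t \area(\Sigma_t) = \area(\Sigma)$.

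The main obstacle is the global extension step: one must ensure that the two ``halves'' $M^\pm$ really can be swept out by families of relative cycles whose areas stay below $\area(\Sigma)$, so that $\Sigma$ remains the maximal slice of the \emph{whole} sweepout rather than merely a local maximum. The unstable direction handles a neighbourhood of $\Sigma$, but away from $\Sigma$ one needs either a monotonicity/area-decreasing property of the chosen global family or an appeal to the absence of \emph{any} stable free boundary minimal hypersurface (which rules out the slices getting ``stuck'' at a competing minimal hypersurface of larger area) together with a pull-tight or min-max comparison argument. I would handle this by taking the global family to be an optimal sweepout in the sense that it is obtained by continuing the unstable flow and then, on each side, running a gradient-flow-type deformation that never increases area until the cycle degenerates; the no-stable-hypersurface hypothesis guarantees this deformation terminates at the empty cycle rather than at another minimal hypersurface. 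This is where the argument genuinely uses the hypothesis, and the details should be quoted from or modelled on the analogous construction in the closed case.
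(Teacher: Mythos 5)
Your high-level strategy matches the paper's: deform $\Sigma$ in the unstable direction, then continue the deformation globally in a way that never increases area, using the absence of stable free boundary minimal hypersurfaces to rule out getting stuck. But your write-up leaves the crucial step genuinely undecided, and some of the alternatives you float would not work.

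The missing technical idea is \emph{mean convexity}. Since $L_\Sigma$ is the linearisation of the mean curvature operator and $\lambda_1 < 0$ with $\phi_1 > 0$, the deformed hypersurfaces $\Sigma_{\pm\epsilon}$ are not merely of smaller area than $\Sigma$ --- they are strictly mean convex, with mean curvature vector pointing away from $\Sigma$. This is exactly what makes the rest of the argument clean: from a mean convex initial condition, one can run the (level set formulation of) free boundary mean curvature flow, which moves $\Sigma_{\pm\epsilon}$ monotonically into the remaining two components of $M\setminus\bigcup_{|t|\le\epsilon}\Sigma_t$, decreases area, and, because there are no stable free boundary minimal hypersurfaces to stop at, must eventually exhaust each component. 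Concatenating the unstable foliation with these two flows gives a single admissible sweepout whose maximal slice is $\Sigma$, hence $\omega^{n-1}(M)\le\area(\Sigma)$. Your final paragraph gestures at such a ``gradient-flow-type deformation,'' which is the right idea (MCF \emph{is} the $L^2$ gradient flow for area), but without the mean convexity observation you have no control on the direction of the flow, no monotone sweep of the region, and no guarantee that the level set flow behaves well through singularities.

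Two of the alternatives you raise are not tenable as stated. Level sets of the distance to $\Sigma$ have no useful area control in general. And the claimed ``standard fact'' --- that any embedded two-sided hypersurface locally maximizing area sits as the top slice of some admissible sweepout --- is not a standard fact; it is essentially the content of this proposition and requires exactly the hypothesis and argument at issue. You also cannot simply ``discard'' portions of a family whose area exceeds $\area(\Sigma)$ and still have a sweepout. So the proposal has the right skeleton but a genuine gap at the step that makes it work: identify the mean convexity of the barriers $\Sigma_{\pm\epsilon}$ and invoke free boundary (level set) mean curvature flow, as in the paper.
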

\begin{proof}
We may construct a sweepout of $M$ by deforming $\Sigma$ by its least eigenfunction and then running mean curvature flow. Indeed, since $\Sigma$ is unstable, the first stability eigenvalue $\lambda_1$ is strictly negative. Let $\phi_1>0$ be the associated eigenfunction on $\Sigma$. As in the closed setting, deformation by $\phi_1$ generates, for $t\in[-\epsilon,\epsilon]$ a family of hypersurfaces $(\Sigma_t,\pr\Sigma_t)$ in $(M,\pr M)$, with $\Sigma_0=\Sigma$. Moreover, since $\phi_1>0$ these hypersurfaces are pairwise disjoint and foliate a neighbourhood of $\Sigma$ in $M$. Since $\lambda_1<0$ and $L$ is the linearisation of the mean curvature at $\Sigma$, each $\Sigma_t$ is mean convex (with mean curvature vector pointing away from $\Sigma$). 

By mean convexity and since there are no stable free boundary minimal hypersurfaces, applying the mean curvature flow to $\Sigma_\epsilon$ and $\Sigma_{-\epsilon}$ will sweep out the remaining two components of $M \setminus \bigcup_{|t|\leq \epsilon} \Sigma_t$. (Strictly, one may use the level set formulation of free boundary mean curvature flow - see for instance \cite{EHIZ} or \cite{GS93}.) Since free boundary mean curvature flow decreases area, combining these with the $|t|\leq \epsilon$ foliation gives a sweepout of $M$ whose maximal area is $\area(\Sigma)$. By definition of width, this implies that $\area(\Sigma) \geq \omega^k(M)$. 
\end{proof}

The lower bound Proposition \ref{prop:estimate-intro} follows by checking that, indeed, space form balls do not admit stable free boundary minimal hypersurfaces:

\begin{corollary}
\label{cor:estimate}
Let $K\in \mathbb{R}$, $R>0$. If $K>0$, further assume that $R\leq \frac{\pi}{2\sqrt{K}}$. Then there are no stable free boundary minimal hypersurfaces in $(B^n_{R; K}, \pr B^n_{R; K})$. Consequently, any nontrivial embedded free boundary minimal hypersurface $\Sigma^{n-1}$ in $(B^n_{R; K}, \pr B^n_{R; K})$ has $\area(\Sigma) \geq \area( B^{n-1}_{R; K})$. 
\end{corollary}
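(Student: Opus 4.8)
The plan is to prove Corollary \ref{cor:estimate} in two stages: first establish that $(B^n_{R;K}, \partial B^n_{R;K})$ admits no stable two-sided free boundary minimal hypersurface, and then invoke Proposition \ref{prop:width-lb} together with Theorem \ref{thm:main} to conclude $\area(\Sigma)\geq \omega^{n-1}(B^n_{R;K}) = \area(B^{n-1}_{R;K})$. For the second stage there is essentially nothing left to do beyond noting that $B^n_{R;K}$ is orientable and that Theorem \ref{thm:main} identifies the width; one should also remark that a \emph{one-sided} free boundary minimal hypersurface $\Sigma$ has an orientable double cover $\widetilde\Sigma$, which is again free boundary minimal, is unstable whenever $\Sigma$ is, and has $\area(\widetilde\Sigma) = 2\area(\Sigma)$, so Proposition \ref{prop:width-lb} applied to $\widetilde\Sigma$ still yields $2\area(\Sigma)\geq \omega^{n-1} \geq \area(B^{n-1}_{R;K})$, which is weaker but still a valid lower bound; in fact for the clean statement one expects embedded hypersurfaces to be two-sided in these simply connected balls anyway, so I would simply restrict attention to the two-sided case as in Proposition \ref{prop:width-lb}.

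The heart of the matter is the instability claim, which I would prove by exhibiting an explicit test function $\phi$ with $Q(\phi,\phi) < 0$ in the second variation form (\ref{eq:stab-1}). The natural choice, exactly as in the closed sphere case, is to use the ambient Killing fields or conformal/coordinate fields of the space form. Concretely, in $\mathbb{M}^n_K$ (realised as a sphere, Euclidean space, or hyperbolic space) one has the coordinate functions — or their space-form analogues — which restrict to functions on $\Sigma$; a standard computation shows that for a minimal $\Sigma$ these restrictions satisfy $\Delta_\Sigma x^a = -c_K\, x^a - (\text{normal component terms})$, and summing $a=1,\dots$ produces, after using minimality, a function $\phi$ realising $L_\Sigma \phi = -\lambda\phi$ with controlled sign. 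For the free boundary problem one must be careful that the candidate vector field $\phi\nu$ is tangent to $\partial B^n_{R;K}$, i.e. that the boundary condition $\partial_\eta\phi = k^{\partial M}(\nu,\nu)\phi$ is handled — this is where the orthogonal (free boundary) intersection and the umbilicity of the geodesic sphere $\partial B^n_{R;K}$ (with constant second fundamental form $k^{\partial M} = \lambda_K\, g$ where $\lambda_K$ depends on $R,K$) enter. One checks that the ambient fields used are in fact tangent to $\partial B^n_{R;K}$ along $\partial\Sigma$ (rotational Killing fields fixing the center are tangent to concentric geodesic spheres), so the relevant boundary terms vanish or contribute with the right sign.

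Carrying this out cleanly, I would actually argue via the first Steklov-type/Jacobi eigenvalue: let $\phi_1>0$ be the first stability eigenfunction. If $\lambda_1 \geq 0$, then $\Sigma$ is stable, and I want a contradiction. Plug into $Q$ a well-chosen test function built from the ambient geometry — for instance, for $K=0$ one uses $\phi = \langle \nu, a\rangle$ for a constant vector $a$ translated appropriately, which satisfies $L_\Sigma\phi = 0$ but fails the free boundary condition in a definite direction, or more robustly one uses the coordinate functions themselves as in Brendle's and the Fraser–Schoen arguments, giving $Q < 0$ unless $\Sigma$ is totally geodesic, and a totally geodesic free boundary hypersurface through a ball is an equatorial $B^{n-1}_{R;K}$ which is itself unstable (its first eigenvalue is negative, as can be computed directly, e.g. via the first Neumann eigenfunction of the ball). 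The restriction $R\leq \frac{\pi}{2\sqrt K}$ when $K>0$ is exactly what is needed so that $\partial B^n_{R;K}$ is still convex (mean-convex) toward the interior and the relevant boundary integrals have the favorable sign; at $R = \frac{\pi}{2\sqrt K}$ the ball is a hemisphere and one reduces to the known sphere instability by reflection.

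The main obstacle I anticipate is the boundary term $\int_{\partial\Sigma} k^{\partial M}(\nu,\nu)\phi^2$ in (\ref{eq:stab-1}): unlike the closed case, one cannot simply quote "any minimal hypersurface in $\mathbb{S}^n$ is unstable" but must track the sign contribution from the convexity of $\partial B^n_{R;K}$. The cleanest route is probably to note that $k^{\partial M}(\nu,\nu)\geq 0$ (strictly positive for $R<\frac{\pi}{2\sqrt K}$, zero at the hemisphere, positive for $K\leq 0$) because $\partial B^n_{R;K}$ is a geodesic sphere which is convex toward the center, so this boundary term only \emph{helps} make $Q$ negative; combined with the interior term $-\int_\Sigma(\Ric^M(\nu,\nu)+|A|^2)\phi^2 = -\int_\Sigma((n-1)K + |A|^2)\phi^2$, a judicious test function (the restriction of an ambient Killing field, which automatically satisfies the free boundary condition) gives $Q<0$ whenever $K\geq 0$ outright, and for $K<0$ one instead uses that $\partial M$ is strictly convex with the right magnitude relative to $(n-1)K$ — here one may need the additional comparison/coarea input, or alternatively one can simply cite that the totally geodesic equatorial ball $B^{n-1}_{R;K}$ is unstable (a one-dimensional Neumann eigenvalue computation) and then note that by Proposition \ref{prop:width-lb} \emph{if} $\Sigma$ were stable it would have to be area-minimizing in its isotopy class, contradicting that the equatorial ball already achieves the width and is itself unstable. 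I would present the $K\geq 0$ case via the direct test function and handle $K<0$ by reduction to the equatorial-ball instability.
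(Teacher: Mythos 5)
Your overall strategy (instability of all FBMHs, then Proposition \ref{prop:width-lb} combined with Theorem \ref{thm:main}) is exactly the paper's. For $K\geq 0$ your argument lands in the right place, though you circle around a simpler observation: the paper just plugs $\phi\equiv 1$ into (\ref{eq:stab-1}), getting $Q(1,1) = -\int_\Sigma\big((n-1)K + |A|^2\big) - k^{\pr\Omega}\,|\pr\Sigma| < 0$ outright; no Killing fields, coordinate functions, or totally-geodesic dichotomy are needed, and in particular the Fraser--Schoen-style Steklov arguments you invoke address a different eigenvalue problem than the Jacobi stability operator here.

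The genuine gap is the case $K<0$, which is also the case the paper highlights as the new content. Your primary suggestion there — ``if $\Sigma$ were stable it would have to be area-minimizing in its isotopy class, contradicting that the equatorial ball already achieves the width and is itself unstable, by Proposition \ref{prop:width-lb}'' — is circular: Proposition \ref{prop:width-lb} has as a \emph{hypothesis} that no stable FBMH exists, and it asserts a lower bound for area, not that stable hypersurfaces are area-minimizing (a false implication in general). So you cannot invoke it to rule out a hypothetical stable $\Sigma$. Your fallback (``one may need the additional comparison/coarea input'') is not an argument. The missing idea is a concrete test function adapted to hyperbolic geometry: the paper takes $\phi = \cosh r$ ($r$ the distance to the center), uses $\Hess\cosh r = (\cosh r)\,g$ to compute $L_\Sigma\phi = |A|^2\phi$, and uses the free-boundary condition $\eta=\nabla r$ to evaluate the boundary term as $\phi(\pr_\eta - \coth R)\phi = -\coth R$, yielding $Q(\phi,\phi) = -\int_\Sigma |A|^2\cosh^2 r - \coth R\,|\pr\Sigma| < 0$. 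Notice the negative Ricci term is entirely absorbed by the ODE that $\cosh r$ satisfies, so no balancing against boundary convexity is needed; this is the step you do not have.
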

\begin{proof}
By scaling we may assume $K\in\{-1,0,1\}$. For convenience we henceforth write $\Omega=  B^n_{R; K}$, and $|\Sigma|$ for the $k$-area of $\Sigma$ and $|\pr \Sigma|$ for the $(k-1)$ area of its boundary. 

Case 1: $K=1$. Then by the assumption on $R$ we have $k^{\pr \Omega} = \cot R \geq 0$. Since $\Ric^\Omega= n-1$, substituting into (\ref{eq:stab-1}) gives \[Q(1,1) = -(n-1) |\Sigma| - \int_\Sigma |A|^2 - \cot R \, |\pr\Sigma| <0.\] 

Case 2: $K=0$. Then $\Ric^\Omega=0$ and $k^{\pr \Omega} = \frac{1}{R}$, substituting into (\ref{eq:stab-1}) gives \[Q(1,1) = - \int_\Sigma |A|^2 - \frac{1}{R} |\pr\Sigma| <0.\] 

Case 3: $K=-1$. Then $\Ric^\Omega=-(n-1)$ and $k^{\pr \Omega} = \coth R$. Let $r$ be the distance function from the centre of the ball, and consider $\phi=\cosh r$. It is well-known that the distance function on hyperbolic space satisfies $\Hess \cosh r = (\cosh r)g,$ where $g$ is the ambient metric. If $\Sigma$ is minimal, it follows that $L_\Sigma \phi = |A|^2\phi$. Moreover by the free boundary condition, the outer conormal $\eta$ of $\pr \Sigma$ in $\Sigma$, is given by $\eta= \nabla r$. Then \[\phi(\pr_\eta - k^{\pr \Omega} (\nu,\nu))\phi = \cosh R \, \sinh R - \cosh^2 R\,\coth R= -\coth R.\]
Substituting into (\ref{eq:stab-2}) gives 

\[Q(\phi,\phi) = -\int_\Sigma |A|^2 \cosh^2 r -\coth R | \pr \Sigma| <0.\]

Thus in all cases, any free boundary minimal hypersurface $\Sigma$ in $(\Omega,\pr\Omega)$ is unstable. The area bound then follows from Proposition \ref{prop:width-lb} and Theorem \ref{thm:main}.
\end{proof}

We also note that Corollary \ref{cor:estimate} implies a sharp lower bound for the isoperimetric ratio (note that the $K=0$ case is a special case of that proven by Brendle \cite{Br12}):

\begin{corollary}
\label{cor:iso}
Let $K\leq 0$, $R>0$. Any embedded free boundary minimal hypersurface $\Sigma^{n-1}$ in $(B^n_{R; K}, \pr B^n_{R; K})$ satisfies 
\[ \frac{|\pr \Sigma|^{n-1}}{|\Sigma|^{n-2}} \geq \frac{|\pr B^{n-1}_{R;K}|^{n-1}}{|B^{n-1}_{R;K}|^{n-2}}. \]
\end{corollary}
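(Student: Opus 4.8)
The plan is to combine the lower bound for $|\Sigma|$ from Corollary \ref{cor:estimate} with a standard isoperimetric inequality for the boundary $\pr\Sigma\subset\pr B^n_{R;K}$, and to check that equality in both is achieved simultaneously by the equatorial disc. First I would record that $\pr\Sigma$ is a closed $(n-2)$-dimensional submanifold of the geodesic sphere $\pr B^n_{R;K}\cong \mathbb{S}^{n-1}\big(\sn_K(R)\big)$ (a round sphere, since $K\le 0$ makes no difference here — the boundary sphere is always a round sphere of radius $\sn_K(R)$). Moreover, the free boundary condition forces $\pr\Sigma$ to be minimal \emph{in} $\pr B^n_{R;K}$: differentiating the orthogonality relation, the mean curvature vector of $\pr\Sigma$ in the sphere equals (up to sign) the restriction of the mean curvature vector of $\Sigma$ in $B^n_{R;K}$, which vanishes. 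Hence $\pr\Sigma$ is a minimal hypersurface of a round sphere, and by the solution of the spherical analogue (a minimal submanifold of $\mathbb{S}^{m}(r)$ has area at least that of an equatorial $\mathbb{S}^{m-1}(r)$, via the cone/monotonicity argument recalled in the introduction for the unit sphere, rescaled) we get $|\pr\Sigma|\ge |\pr B^{n-1}_{R;K}|$, since $\pr B^{n-1}_{R;K}$ is precisely an equatorial $\mathbb{S}^{n-2}\big(\sn_K(R)\big)$ in $\pr B^n_{R;K}$.

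Next I would assemble the ratio. Write $a = |\pr\Sigma|$, $b=|\Sigma|$, and $a_0 = |\pr B^{n-1}_{R;K}|$, $b_0=|B^{n-1}_{R;K}|$. We have just shown $a\ge a_0$, and Corollary \ref{cor:estimate} gives $b\ge b_0$. We want $a^{n-1}/b^{n-2}\ge a_0^{n-1}/b_0^{n-2}$. This does \emph{not} follow from $a\ge a_0$ and $b\ge b_0$ alone, because $b$ appears with a negative exponent. The correct route is a genuine isoperimetric inequality \emph{relating} $a$ and $b$ for the given $\Sigma$: namely, any free boundary minimal hypersurface in $(B^n_{R;K},\pr B^n_{R;K})$ with $K\le 0$ satisfies the linear isoperimetric / monotonicity-type bound $|\Sigma|\le \frac{\sn_K(R)}{n-1}\,|\pr\Sigma|$, with equality iff $\Sigma$ is a totally geodesic equatorial disc. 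For $K=0$ this is exactly the identity $|\Sigma| \le \frac{R}{n-1}|\pr\Sigma|$ that underlies Brendle's isoperimetric corollary, obtained from the first variation applied to the position vector field $x$ (for which $\div_\Sigma x = n-1$ on a minimal $\Sigma$ and $\langle x,\eta\rangle = R$ on $\pr\Sigma$ by the free boundary condition on the Euclidean sphere). For $K<0$ one uses instead the vector field $X=\grad(\text{something like }\sn_K'(r)\cdot\text{radial})$; more precisely, on $\mathbb{M}^n_K$ with $K=-1$ the field $X$ with $X = \sinh r\,\pr_r$ satisfies $\div_\Sigma X = (n-1)\cosh r \ge (n-1)$ on a minimal $\Sigma$, while $\langle X,\eta\rangle = \sinh R$ along $\pr\Sigma$; integrating gives $(n-1)|\Sigma|\le \sinh R\,|\pr\Sigma|$, i.e. $b\le \frac{\sn_K(R)}{n-1}a$. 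The same identity at the equatorial disc is an equality: there $b_0 = \frac{\sn_K(R)}{n-1}a_0$ (just the statement that an $(n-1)$-ball of radius $R$ in $\mathbb{M}^{n-1}_K$ has $(n-1)|B^{n-1}_{R;K}| = \int_{\pr}\langle X,\eta\rangle$, which one checks directly, or notes $\frac{d}{dR}|B^{n-1}_{R;K}| = |\pr B^{n-1}_{R;K}|$ combined with $|B| = \beta_{n-2}\int_0^R \sn_K^{n-2}$).

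With these two ingredients in hand the conclusion is pure algebra. From $b\le \frac{\sn_K(R)}{n-1}a$ and $b_0 = \frac{\sn_K(R)}{n-1}a_0$ we get $b/b_0 \le a/a_0$, hence $b^{\,n-2}/b_0^{\,n-2} \le (a/a_0)^{n-2}$, so
\[
\frac{a^{n-1}/b^{n-2}}{a_0^{n-1}/b_0^{n-2}} = \left(\frac{a}{a_0}\right)^{n-1}\left(\frac{b_0}{b}\right)^{n-2} \ge \left(\frac{a}{a_0}\right)^{n-1}\left(\frac{a_0}{a}\right)^{n-2} = \frac{a}{a_0}\ge 1,
\]
which is the claimed inequality. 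Note that here we do \emph{not} even need the lower bound $b\ge b_0$ from Corollary \ref{cor:estimate} — only the upper bound on $b$ in terms of $a$ and the spherical minimal-submanifold inequality $a\ge a_0$; I would remark on this. (Alternatively, one could use $b\ge b_0$ together with $a\ge a_0$ if one instead had an upper bound on $a$ in terms of $b$, but the direction above is the clean one.)

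The main obstacle is establishing the linear isoperimetric identity $|\Sigma|\le \frac{\sn_K(R)}{n-1}|\pr\Sigma|$ in the negative curvature case with the correct constant and checking the equality case, and (to a lesser extent) verifying carefully that the free boundary condition in $\mathbb{M}^n_K$ indeed makes $\pr\Sigma$ minimal in the boundary sphere so that the spherical area bound applies. Both are first-variation computations with the right conformal vector field on $\mathbb{M}^n_K$ — for $K=-1$ the field generated by $\grad\cosh r$ is the natural choice, paralleling the use of $\cosh r$ already exploited in the proof of Corollary \ref{cor:estimate} — but one must be attentive to signs and to the fact that $\div_\Sigma(\grad\cosh r) = (n-1)\cosh r$ only uses minimality of $\Sigma$, giving the desired one-sided bound since $\cosh r\ge 1$. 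I would also double-check the degenerate behaviour as $R\to\infty$ and the case $n=2$ (where the exponents read $a^1/b^0 \ge a_0^1/b_0^0$, i.e. $|\pr\Sigma|\ge|\pr B^1_{R;K}| = 2R$, just the statement that $\Sigma$ meets $\pr B^2_{R;K}$ in at least two points, which is immediate).
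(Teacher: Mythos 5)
Your overall strategy is a genuine alternative to the paper's and would be clean if it worked, but there is a concrete error in the hyperbolic case that breaks the chain of inequalities.

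The key issue is the linear isoperimetric inequality in Case $K=-1$. You take $X = \sinh r\,\partial_r = \grad\cosh r$, correctly compute $\div_\Sigma X = (n-1)\cosh r \ge n-1$ on a minimal $\Sigma^{n-1}$ and $\langle X,\eta\rangle = \sinh R$ along $\partial\Sigma$, and conclude $(n-1)|\Sigma|\le \sinh R\,|\partial\Sigma|$. This inequality is true but \emph{not sharp}. You then claim that the equatorial disc achieves equality, i.e.\ $b_0 = \tfrac{\sinh R}{n-1}a_0$, but this is false: for the equatorial disc the divergence identity gives $(n-1)\int_{B^{n-1}_{R;K}}\cosh r\,\d\mu = \sinh R\cdot a_0$, and since $\cosh r > 1$ away from the origin the left side is \emph{strictly larger} than $(n-1)b_0$. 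Equivalently, $b_0 = \beta_{n-2}\int_0^R\sinh^{n-2}s\,\d s < \beta_{n-2}\frac{\sinh^{n-1}R}{n-1} = \frac{\sinh R}{n-1}a_0$. The sanity check you offer ($\tfrac{\d}{\d R}|B^{n-1}_{R;K}| = |\partial B^{n-1}_{R;K}|$) is correct but does not imply the identity $b_0 = \tfrac{\sinh R}{n-1}a_0$ you need. Consequently the step ``$b/b_0 \le a/a_0$'' fails: you only have $b \le Ca$ with $C > b_0/a_0$, so no comparison between $b/b_0$ and $a/a_0$ follows, and the final algebraic chain collapses. One cannot salvage this by reverting to $b\ge b_0$: with the non-sharp constant $C$ the inequality $a_0/C^{n-2} \ge a_0^{n-1}/b_0^{n-2}$ is equivalent to $b_0\ge Ca_0$, which is false.

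The fix is to replace $X$ with the vector field actually used in the paper (following Freidin--McGrath \cite{FM19}), namely $\Phi = \phi(r)\partial_r$ with $\phi(r) = \sinh^{-(n-2)}(r)\int_0^r\sinh^{n-2}s\,\d s$, for which one has $\div_\Sigma\Phi \ge 1$ on any minimal $\Sigma^{n-1}$, with equality on a totally geodesic equatorial slice. This gives $|\Sigma|\le\phi(R)|\partial\Sigma|$ with the sharp constant $\phi(R) = b_0/a_0$, and then your algebra
\[
\frac{a^{n-1}/b^{n-2}}{a_0^{n-1}/b_0^{n-2}} = \Bigl(\tfrac{a}{a_0}\Bigr)^{n-1}\Bigl(\tfrac{b_0}{b}\Bigr)^{n-2} \ge \Bigl(\tfrac{a}{a_0}\Bigr)^{n-1}\Bigl(\tfrac{a_0}{a}\Bigr)^{n-2} = \tfrac{a}{a_0} \ge 1
\]
goes through using $a\ge a_0$. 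Note that $\cosh r$ appears in the proof of Corollary \ref{cor:estimate} as a test function in the stability quadratic form, a different role from the divergence-theorem computation here, so the parallel you drew was a red flag rather than a guide.

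Once corrected, your route is genuinely different from the paper's: you combine the sharp linear isoperimetric inequality with the boundary bound $|\partial\Sigma|\ge|\partial B^{n-1}_{R;K}|$ (coming from $\partial\Sigma$ being minimal in the umbilic boundary sphere, plus the area lower bound for minimal hypersurfaces in a round sphere), and you do not invoke the width at all. The paper instead combines the same linear isoperimetric inequality with the interior bound $|\Sigma|\ge|B^{n-1}_{R;K}|$ from Corollary \ref{cor:estimate}, which is the width-theoretic input and the point of the section. Both are legitimate once the constant is right; your version is independent of Theorem \ref{thm:main}, while the paper's is framed as a corollary of it. You should verify carefully the claim that $\partial\Sigma$ is minimal in $\partial B^n_{R;K}$ for umbilic boundary, which you state somewhat loosely.
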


\begin{proof}
The $K=0$ case is precisely \cite[Corollary 5]{Br12}. By scaling, we may henceforth assume $K=-1$. Let $r$ be the distance to the centre of the ball. As in \cite{FM19}, we may define a smooth radial vector field $\Phi = \phi(r) \frac{\pr}{\pr r}$, where $\phi(r) = \frac{1}{\sinh^{n-2} r} \int_0^r \sinh^{n-2} s\,\d s$. Moreover, given any minimal hypersurface $\Sigma^{n-1}$ this vector field satisfies $\div_\Sigma \Phi \geq 1$. 

If $\Sigma$ has free boundary, then by the divergence theorem it follows that \begin{equation}\label{eq:iso-1}|\Sigma| \leq \int_\Sigma \div_\Sigma \Phi = \int_{\pr\Sigma} \langle \Phi , \frac{\pr}{\pr r}\rangle = \phi(R) |\pr\Sigma|.\end{equation} On the other hand, recalling that $\beta_{n-2} = |\mathbb{S}^{n-2}|$, we have that $|\pr B^{n-1}_{R;K}| = \beta_{n-2} \sinh^{n-1} R$ and $|B^{n-1}_{R;K}| = \beta_{n-2} \int_0^R \sinh^{n-1} s\,\d s$. Using (\ref{eq:iso-1}) and Corollary \ref{cor:estimate} we conclude that 

\[ \frac{|\pr \Sigma|^{n-1}}{|\Sigma|^{n-2}} \geq \phi(R)^{n-1} |\Sigma| \geq \phi(R)^{n-1} |B^{n-1}_{R;K}| = \frac{|\pr B^{n-1}_{R;K}|^{n-1}}{|B^{n-1}_{R;K}|^{n-2}}  . \qedhere \]

\end{proof}

\bibliographystyle{amsalpha}
\bibliography{fb-width-v6}

\end{document}